\newcommand{\mathleft}{\@fleqntrue\@mathmargin0pt}
\newcommand{\mathcenter}{\@fleqnfalse}
\theoremstyle{plain}
\newtheorem{theorem}{Theorem}[section]
\theoremstyle{definition}
\newtheorem{definition}[theorem]{Definition}
\newtheorem{corollary}[theorem]{Corollary}
\newtheorem{lemma}[theorem]{Lemma}
\newtheorem{proposition}[theorem]{Proposition}
\newtheorem{remark}[theorem]{Remark}
\newtheorem{question}[theorem]{Question}
\newtheorem{conjecture}[theorem]{Conjecture}
\newtheorem*{acknowledgement}{Acknowledgement}
\def\colvec#1{\expandafter\colvec@i#1,,,,,,,,,\@nil}
\def\colvec@i#1,#2,#3,#4,#5,#6,#7,#8,#9\@nil{%
  \ifx$#2$ \begin{pmatrix}#1\end{pmatrix} \else
    \ifx$#3$ \begin{pmatrix}#1\\#2\end{pmatrix} \else
      \ifx$#4$ \begin{pmatrix}#1\\#2\\#3\end{pmatrix}\else
        \ifx$#5$ \begin{pmatrix}#1\\#2\\#3\\#4\end{pmatrix}\else
          \ifx$#6$ \begin{pmatrix}#1\\#2\\#3\\#4\\#5\end{pmatrix}\else
            \ifx$#7$ \begin{pmatrix}#1\\#2\\#3\\#4\\#5\\#6\end{pmatrix}\else
              \ifx$#8$ \begin{pmatrix}#1\\#2\\#3\\#4\\#5\\#6\\#7\end{pmatrix}\else
                 \PackageError{Column Vector}{The vector you tried to write is too big, use pmatrix instead}{Try using the pmatrix environment}
              \fi
            \fi
          \fi
        \fi
      \fi
    \fi
  \fi 
}  
\title{Arithmetic Properties of Character Degrees and \\ the Generalised Knutson Index} 
\author{Diego Martín Duro}
\date{\vspace{-1ex}}
\begin{document}

\maketitle

\begin{abstract}
In this paper, we introduce the generalised Knutson Index and compute it for the special linear groups and projective special linear groups of degree two by computing the lowest common multiple of the degrees of their irreducible representations. We also classify all alternating and symmetric groups such that the lowest common multiple of the degrees of their irreducible representations equals the order groups, which yields a lower bound on the generalised Knutson Indices of these groups.
\end{abstract}

\section{Introduction}
The set of virtual characters over a group, i.e., integer linear combinations of irreducible characters, forms a ring with direct sum as addition and tensor product as multiplication. Donald Knutson conjectured the following in 1973 \cite{knutson}.

\begin{conjecture} [Knutson]
For every irreducible character $\chi$ of a finite group $G$, there exists a virtual character $\lambda \in \mathbb{Z}[\text{Irr}(G)]$ such that $\chi \otimes \lambda = \rho_\text{reg}$, where $\rho_\text{reg}$ is the regular character.
\end{conjecture}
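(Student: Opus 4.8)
The plan is to reinterpret the desired identity $\chi \otimes \lambda = \rho_{\text{reg}}$ as an integer linear system on the unknown multiplicities of $\lambda$, and then to try to solve it by exploiting the abundance of zeros that every nonlinear irreducible character must have. The natural starting point is the identity
\[
\chi \otimes \rho_{\text{reg}} = \chi(1)\,\rho_{\text{reg}},
\]
which follows from $\rho_{\text{reg}} = \text{Ind}_{1}^{G}(1)$ together with the projection formula $\chi \otimes \text{Ind}_1^G(1) = \text{Ind}_1^G(\text{Res}_1 \chi)$. This immediately suggests the naive candidate $\lambda = \rho_{\text{reg}}/\chi(1)$, which is a genuine virtual character precisely when $\chi(1)$ divides every degree $\psi(1)$; since the trivial character has degree one, this forces $\chi(1)=1$, so the naive candidate only disposes of the linear characters. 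For $\chi(1) > 1$ a subtler $\lambda$ is required.

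First I would move the problem entirely inside the character ring. Writing $x_\phi = \langle \lambda, \phi\rangle \in \mathbb{Z}$ for the unknown multiplicities and expanding by Frobenius reciprocity, the equation $\chi \otimes \lambda = \rho_{\text{reg}}$ is equivalent to
\[
\sum_{\phi \in \text{Irr}(G)} \langle \chi \otimes \phi, \psi\rangle\, x_\phi = \psi(1) \qquad \text{for all } \psi \in \text{Irr}(G).
\]
This is a linear system $Cx = d$ with integer coefficient matrix $C = \bigl(\langle \chi \otimes \phi, \psi\rangle\bigr)_{\psi,\phi}$, the matrix of multiplication by $\chi$ on $\mathbb{Z}[\text{Irr}(G)]$, and integer target $d = (\psi(1))_\psi$. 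Passing to the evaluation isomorphism $\mathbb{C}[\text{Irr}(G)] \cong \mathbb{C}^{k}$, where $k$ is the number of conjugacy classes, the operator $C$ becomes the diagonal matrix with entries $\chi(g)$ over a transversal of classes. Hence the system is uniquely solvable over $\mathbb{C}$ on exactly those classes where $\chi(g)\neq 0$, while on each class with $\chi(g)=0$ the value $\lambda(g)$ is entirely free, and there is no solvability obstruction over $\mathbb{C}$ since $\rho_{\text{reg}}$ already vanishes off the identity.

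Next I would invoke the classical theorem of Burnside that every irreducible $\chi$ with $\chi(1)>1$ vanishes on some element, so the system is genuinely underdetermined and we acquire real freedom to steer $\lambda$ toward integrality. Concretely one fixes $\lambda(1) = |G|/\chi(1)$ and sets $\lambda(g)=0$ on every nonidentity class where $\chi(g)\neq 0$, then treats the values on the vanishing classes as free parameters, asking whether they can be tuned so that every multiplicity $\frac{1}{|G|}\sum_g \lambda(g)\overline{\phi(g)}$ lands in $\mathbb{Z}$.

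The hard part will be exactly this integrality step: controlling the denominators introduced when inverting $C$. The natural invariant governing solvability of $Cx=d$ over $\mathbb{Z}$ is the Smith normal form of $C$, whose elementary divisors are assembled from the values $\chi(g)$ and hence from the arithmetic of the character degrees, and it is here that the lowest common multiple of the degrees enters as the true obstruction. I expect that for many groups the freedom supplied by Burnside's zeros is enough to absorb the denominators, but that there is no purely formal reason for this to succeed universally; determining precisely when the integrality can and cannot be arranged is the crux of the matter, and it is this potential failure that the generalised Knutson index introduced below is designed to measure.
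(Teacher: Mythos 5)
You cannot complete this proof, because the statement is false: it is Knutson's 1973 conjecture, which the paper records only in order to refute it. Savitskii observed in 1992 that it fails for $SL_2(\mathbb{F}_5)$, and the paper's whole purpose is to quantify the failure via the (generalised) Knutson Index; indeed it cites the computation $\mathcal{K}(SL_2(q)) = 2$ for odd $q \geq 5$, meaning that for such groups some irreducible $\chi$ admits a $\lambda$ with $\chi \otimes \lambda = 2\rho_{\text{reg}}$ but none with $\chi \otimes \lambda = \rho_{\text{reg}}$. So there is no proof in the paper to compare against, and no proof anywhere.

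That said, your analysis is essentially correct up to the point where you (rightly) decline to claim victory. The identity $\chi \otimes \rho_{\text{reg}} = \chi(1)\rho_{\text{reg}}$, the reduction to the integer linear system $Cx = d$ for the multiplication-by-$\chi$ operator, its diagonalization under evaluation at conjugacy classes, and the role of Burnside's vanishing theorem in making the system underdetermined are all sound, and your identification of the integrality of $x$ (equivalently, the Smith normal form of $C$, or the denominators appearing in $\frac{1}{|G|}\sum_g \lambda(g)\overline{\phi(g)}$) as the true obstruction is exactly the right diagnosis. The one concrete correction is that this obstruction is not always surmountable: for $SL_2(\mathbb{F}_5)$ the free parameters on the vanishing classes cannot be tuned to clear the denominators for every irreducible $\chi$, and more generally the paper shows the obstruction is governed by quantities such as $L(G)$, the lowest common multiple of the character degrees. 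Your last sentence already anticipates this; the honest conclusion is that the conjecture should be disproved, not proved, and your framework is a reasonable starting point for exhibiting the counterexample rather than for a proof.
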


Savitskii observed in 1992 that this conjecture failed for $SL_2(\mathbb{F}_5)$ \cite{savitskii}. We found further counter-examples to this conjecture and introduced the Knutson Index of a group as a measure of Knutson's Conjecture failure along with a few algebraic properties and classification results \cite{diego}.

\begin{definition}
The Knutson Index of a finite group $G$ is defined as the smallest positive integer $n$ such that for every irreducible character $\chi$ of $G$ there exists a virtual $\lambda$ such that $\chi \otimes \lambda = n \rho_\text{reg}$. We denote it by $\mathcal{K}(G)$.
\end{definition}

We now introduce the generalised Knutson Index.

\begin{definition}
A character $\chi$ is $\rho$-invertible if there exists a virtual character $\lambda$ such that $\chi \otimes \lambda = \rho$. Let $\rho$ be a character of the smallest degree such that every irreducible character is $\rho$-invertible. The generalised Knutson Index of $G$ is
$$ \mathcal{K}'(G) = \frac{\rho(\text{id})}{|G|} \; .$$

\end{definition}

\begin{remark}
This notion can be generalised to an arbitrary semisimple tensor category where the dimension used is the Frobenius-Perron dimension \cite{etingof}.

Let $\mathcal{C}$ be a semisimple tensor category with simple objects $M_1, \ldots M_n$. Let $M$ be the object of the smallest Frobenius-Perron dimension such that all simple objects are $M$-invertible. The generalised Knutson of the category is
$$ \mathcal{K}'(\mathcal{C}) = \frac{\mathcal{FP}(\mathcal{C})}{\mathcal{FP}(M)} \; . $$
If $\mathcal{C}$ is the category of representations of a finite group $G$ then $\mathcal{K}'(\mathcal{C}) = \mathcal{K}'(G)$.
\end{remark}

In this paper, we start by establishing preliminary results of the generalised Knutson Index and first compute it for $SL_2(q)$ and $PSL_2(q)$ and then for $S_n$ and $A_n$. For this purpose, we will also study $L(G)$, the lowest common multiple of the degrees of all irreducible characters of a group $G$ and the existences of zeros in every non-trivial column of a character table.

\begin{acknowledgement}
I would like to express my sincere appreciation towards my PhD supervisor Professor Dmitriy Rumynin for his continued guidance and corrections that have contributed to this paper. This work was supported by the UK Engineering and Physical Sciences Research Council (EPSRC) grant EP/T51794X/1.
\end{acknowledgement}

\section{Preliminary results}
\begin{proposition} \label{lcm}
Let $L(G)$ be the lowest common multiples of the degrees of all irreducible characters of $G$. Then $\mathcal{K}'(G) \geq L(G) / |G|$. In particular, if $L(G) = |G|$ we have that $\mathcal{K}'(G) \geq 1$.
\end{proposition}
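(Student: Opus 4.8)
The plan is to exploit the fact that evaluation at the identity, $\eta \mapsto \eta(\text{id})$, is a ring homomorphism from the virtual character ring $\mathbb{Z}[\text{Irr}(G)]$ to $\mathbb{Z}$ (since degree is additive over direct sums and multiplicative over tensor products). This turns the tensor-product relation witnessing $\rho$-invertibility into a purely multiplicative relation among integers, from which the divisibility claim falls out. Concretely, let $\rho$ be a character of smallest degree such that every irreducible character is $\rho$-invertible, so that $\mathcal{K}'(G) = \rho(\text{id})/|G|$, and write $\text{Irr}(G) = \{\chi_1, \ldots, \chi_k\}$ with degrees $d_i = \chi_i(\text{id})$.

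First I would fix an irreducible $\chi_i$ and invoke the virtual character $\lambda_i \in \mathbb{Z}[\text{Irr}(G)]$ guaranteed by $\rho$-invertibility, i.e. $\chi_i \otimes \lambda_i = \rho$. Evaluating both sides at the identity gives $d_i \cdot \lambda_i(\text{id}) = \rho(\text{id})$. The crucial observation is that $\lambda_i(\text{id})$ is an \emph{integer}, being an integer combination of the integer degrees $d_j$; hence $d_i \mid \rho(\text{id})$. Since this holds for every $i$, the integer $\rho(\text{id})$ is a common multiple of all the degrees $d_1, \ldots, d_k$ and is therefore divisible by their lowest common multiple $L(G)$. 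As $\rho$ is a genuine nonzero character we have $\rho(\text{id}) > 0$, so divisibility by $L(G)$ upgrades to the inequality $\rho(\text{id}) \geq L(G)$. Dividing by $|G|$ yields $\mathcal{K}'(G) = \rho(\text{id})/|G| \geq L(G)/|G|$, and substituting $L(G) = |G|$ gives the stated special case $\mathcal{K}'(G) \geq 1$.

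The argument is essentially a one-line divisibility computation, so there is no single hard technical step; the points that need care are rather bookkeeping ones. The main thing to verify is that $\lambda_i(\text{id}) \in \mathbb{Z}$ rather than merely lying in $\mathbb{C}$, which is immediate once one recalls that $\lambda_i$ is by hypothesis a $\mathbb{Z}$-linear combination of irreducible characters. The other subtlety is ensuring $\rho(\text{id})$ is strictly positive, so that $L(G) \mid \rho(\text{id})$ genuinely forces $\rho(\text{id}) \geq L(G)$; this rests on $\rho$ being a nonzero character and hence of positive degree. Note finally that no information about \emph{which} admissible $\rho$ is of minimal degree is needed: the divisibility $L(G)\mid\rho(\text{id})$ holds for every character $\rho$ making all irreducibles $\rho$-invertible, and in particular for the minimal one defining $\mathcal{K}'(G)$.
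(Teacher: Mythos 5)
Your proposal is correct and follows essentially the same route as the paper: the paper's proof asserts directly that $\chi(\text{id})$ divides $\rho(\text{id})$ for every irreducible $\chi$, hence $L(G) \mid \rho(\text{id})$, and you simply make explicit why that divisibility holds (evaluating $\chi \otimes \lambda = \rho$ at the identity and noting $\lambda(\text{id}) \in \mathbb{Z}$). The extra care about $\rho(\text{id}) > 0$ is a reasonable bit of bookkeeping the paper leaves implicit.
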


\begin{proof}
Let $\rho$ be the character of the smallest degree such that all irreducible characters are $\rho$-invertible. As $\chi(\text{id})$ divides $\rho(\text{id})$ for any irreducible character $\chi$ of $G$ we have that $L(G)$ divides $\rho(\text{id})$. We conclude that $\mathcal{K}'(G) \geq L(G) / |G|$.
\end{proof}

Recall that if $\mathcal{K}(G) = 1$, then we say that $G$ is a group of Knutson type. Note also that $\mathcal{K}'(G) \leq \mathcal{K}(G)$ since every irreducible character is $\mathcal{K}(G) \rho_\text{reg}$-invertible and, in particular, for groups of Knutson type, we have that $\mathcal{K}'(G) \leq 1$.

\begin{corollary}
If $G$ is of Knutson type and $L(G) = |G|$ then $\mathcal{K}'(G) = 1$.
\end{corollary}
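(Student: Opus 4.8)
The plan is to obtain the equality by a squeeze argument, establishing $\mathcal{K}'(G) \geq 1$ and $\mathcal{K}'(G) \leq 1$ separately; since $\mathcal{K}'(G)$ is a single well-defined quantity, the two inequalities force $\mathcal{K}'(G) = 1$. Both bounds are already within reach from the preceding material, so the proof is essentially a matter of assembling them.

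First I would establish the lower bound directly from Proposition \ref{lcm}, which asserts $\mathcal{K}'(G) \geq L(G)/|G|$. Feeding in the hypothesis $L(G) = |G|$ gives $\mathcal{K}'(G) \geq 1$ at once; this is in fact the concluding sentence of Proposition \ref{lcm} applied verbatim.

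For the upper bound I would unpack the Knutson-type hypothesis $\mathcal{K}(G) = 1$. By the definition of the Knutson Index, this says that for every irreducible $\chi$ there is a virtual $\lambda$ with $\chi \otimes \lambda = \rho_\text{reg}$; in the language of the generalised index, every irreducible character is $\rho_\text{reg}$-invertible. Hence $\rho_\text{reg}$ is an admissible candidate in the minimisation defining $\mathcal{K}'(G)$, and since that minimisation selects a character $\rho$ of smallest degree we obtain $\rho(\text{id}) \leq \rho_\text{reg}(\text{id}) = |G|$, i.e.\ $\mathcal{K}'(G) \leq 1$. This is precisely the inequality $\mathcal{K}'(G) \leq \mathcal{K}(G)$ recorded in the remark following Proposition \ref{lcm}, specialised to $\mathcal{K}(G) = 1$.

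The argument presents no genuine difficulty, and the nearest thing to a subtlety—worth one explicit line—is checking that the regular character truly qualifies as a candidate $\rho$ under the Knutson-type hypothesis, so that the upper bound is legitimate rather than merely plausible; this follows immediately from matching the two definitions of invertibility. Once both inequalities are in hand, combining them yields $\mathcal{K}'(G) = 1$, as claimed.
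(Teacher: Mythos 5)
Your proof is correct and matches the paper's (implicit) argument exactly: the lower bound $\mathcal{K}'(G) \geq 1$ comes from Proposition \ref{lcm} with $L(G) = |G|$, and the upper bound comes from the observation preceding the corollary that $\mathcal{K}'(G) \leq \mathcal{K}(G) = 1$ because every irreducible character is $\rho_\text{reg}$-invertible for groups of Knutson type. Nothing further is needed.
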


We are therefore interested in classifying all groups such that the lowest common multiple of all its irreducible characters equals the order of the group. In this paper, we will answer the question for the alternating groups $A_n$ and symmetric group $S_n$.

\begin{proposition} \label{zeros}
If every non-trivial column of the character table of a group $G$ has a zero, we have that $\mathcal{K}'(G) = \mathcal{K}(G)$.
\end{proposition}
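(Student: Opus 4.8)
The plan is to combine the inequality $\mathcal{K}'(G) \le \mathcal{K}(G)$ already noted after Proposition \ref{lcm} with a matching lower bound, so the real work is to prove $\mathcal{K}'(G) \ge \mathcal{K}(G)$. Let $\rho$ be a character of smallest degree for which every irreducible character is $\rho$-invertible, so that $\mathcal{K}'(G) = \rho(\text{id})/|G|$ by definition. The strategy is to use the column-zero hypothesis to determine the support of $\rho$ and thereby recognise $\rho$ as a multiple of the regular character.

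The key step is to show that the hypothesis forces $\rho(g) = 0$ for every $g \ne \text{id}$. Fixing a non-identity class with representative $g$, the hypothesis supplies an irreducible character $\chi$ with $\chi(g) = 0$. Since $\chi$ is $\rho$-invertible, I would pick a virtual character $\lambda$ with $\chi \otimes \lambda = \rho$ and evaluate this identity at $g$: the product formula $(\chi \otimes \lambda)(g) = \chi(g)\lambda(g)$ immediately yields $\rho(g) = \chi(g)\lambda(g) = 0$. Letting $g$ range over all non-identity classes shows that $\rho$ is supported on the identity.

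To finish, I would recognise a character supported on the identity as a multiple of $\rho_\text{reg}$. Since $\rho(g)=0$ for $g\ne\text{id}$, the multiplicity of each irreducible $\psi$ in $\rho$ is $\langle \rho,\psi\rangle = \rho(\text{id})\,\psi(\text{id})/|G|$; testing against the trivial character forces $|G|$ to divide $\rho(\text{id})$, and setting $n=\rho(\text{id})/|G|$ gives $\langle \rho,\psi\rangle = n\,\psi(\text{id})$ for all $\psi$, that is, $\rho = n\,\rho_\text{reg}$ with $n$ a positive integer. But then every irreducible character is $n\rho_\text{reg}$-invertible, so minimality in the definition of the Knutson Index gives $n \ge \mathcal{K}(G)$. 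Hence $\mathcal{K}'(G) = \rho(\text{id})/|G| = n \ge \mathcal{K}(G)$, and together with the reverse inequality we obtain $\mathcal{K}'(G) = \mathcal{K}(G)$.

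I expect the only subtle point to be the last recognition step: one must use that $\rho$ is a genuine character (not merely virtual) to guarantee the multiplicities $n\,\psi(\text{id})$ are non-negative integers, and one must exclude the degenerate possibility $\rho = 0$, which is ruled out because Proposition \ref{lcm} already forces $\rho(\text{id}) \ge L(G) \ge 1$. The rest of the argument is a single evaluation of the defining tensor identity at a non-identity element, so I anticipate no computational difficulty there.
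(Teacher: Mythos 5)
Your proposal is correct and follows essentially the same route as the paper: use the column zeros to force $\rho(g)=0$ off the identity, recognise $\rho$ as an integer multiple of $\rho_\text{reg}$, and conclude by minimality on both sides. You simply spell out the details (the evaluation of $\chi\otimes\lambda$ at $g$ and the inner-product computation showing $|G|$ divides $\rho(\text{id})$) that the paper leaves implicit.
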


\begin{proof}
Suppose that all irreducible characters of $G$ are $\rho$-invertible and let $g \in G^{\setminus \text{\{id\}}}$. We have that $\chi(g) = 0$ for some $\rho$-invertible character so $\rho(g) = 0$ whenever $g \neq \text{id}$. Hence $\rho$ is a multiple of the regular character and we conclude that $\mathcal{K}'(G) = \mathcal{K}(G)$.
\end{proof}

\section{Special linear group}
Recall that the Knutson Indices of $SL_2(q)$ and $PSL_2(q)$ are known \cite{diego}:
\begin{equation*}
\mathcal{K}(SL_2(q)) = 
    \begin{cases}
      1 & \text{if $q = 2^n$ or $3$} \\
      2 & \text{otherwise}
    \end{cases} \qquad \qquad \mathcal{K}(PSL_2(q)) = 
    \begin{cases}
      1 & \text{if $q = 2^n$ or $q = 2^n \pm 1$} \\
      2 & \text{otherwise}
    \end{cases}
\end{equation*}

So these values are also upper bounds of the generalised Knutson Indices. 

\begin{theorem}
For $PSL_2(q)$, the generalised Knutson Index coincides with the Knutson Index.
\begin{equation*}
\mathcal{K}'(PSL_2(q)) = \mathcal{K}(PSL_2(q)) = 
    \begin{cases}
      1 & \text{if $q = 2^n$ or $q = 2^n \pm 1$} \\
      2 & \text{otherwise}
    \end{cases}
\end{equation*}
\end{theorem}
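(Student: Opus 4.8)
The plan is to derive the equality $\mathcal{K}'(PSL_2(q)) = \mathcal{K}(PSL_2(q))$ from Proposition \ref{zeros}, after which the displayed values are just the values of $\mathcal{K}(PSL_2(q))$ recalled above. So the entire content is to verify the hypothesis of Proposition \ref{zeros}: that \emph{every non-trivial column of the character table of $PSL_2(q)$ contains a zero}. First I would reduce to $SL_2(q)$, using that the irreducible characters of $PSL_2(q)$ are exactly those irreducible characters of $SL_2(q)$ trivial on the centre $Z = \{\pm I\}$, and that the non-trivial classes of $PSL_2(q)$ are the images of the non-central classes of $SL_2(q)$. It therefore suffices to produce, for each non-central class of $SL_2(q)$, an irreducible character with trivial central character that vanishes on it.

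The key input is the classical character table of $SL_2(q)$, which I would organise around the trichotomy of non-central elements: non-identity unipotents (and their central translates $\pm u$), split regular semisimple elements (eigenvalues in $\mathbb{F}_q^\ast$, distinct from $\pm 1$), and non-split regular semisimple elements (eigenvalues in $\mathbb{F}_{q^2}\setminus\mathbb{F}_q$). The vanishing facts I would invoke are standard: the Steinberg character (degree $q$) vanishes on every non-identity unipotent element, and having trivial central character it also vanishes on the translates $\pm u$; every discrete series character (degree $q-1$) vanishes on all split regular semisimple classes; and every principal series character (degree $q+1$) vanishes on all non-split regular semisimple classes. Since every non-central semisimple element of $SL_2(q)$ is regular, this trichotomy exhausts all non-central classes, so a zero value is available in each case — the only remaining issue is to select a vanishing character that descends to $PSL_2(q)$.

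The descent requirement is where the work concentrates. The Steinberg character always descends, so the unipotent columns are immediate. For the split (respectively non-split) torus columns I need a discrete series (respectively principal series) character with trivial central character, i.e. an inducing character $\beta$ (respectively $\alpha$) of the relevant torus with $\beta(-1)=1$ (respectively $\alpha(-1)=1$) yielding an irreducible member of the series. I would count the characters of the cyclic tori $\mathbb{F}_q^\ast \cong \mathbb{Z}/(q-1)$ and the norm-one torus $\cong \mathbb{Z}/(q+1)$ that are trivial on the involution, discard the at most two characters fixed under inversion (which give reducible series, splitting into the half-degree characters $\xi_{1,2}$ and $\eta_{1,2}$), and check that for $q \geq 4$ at least one admissible character survives on each torus. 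When the full series fails to descend but $q \equiv 1 \pmod 4$ (respectively $q \equiv 3 \pmod 4$), the half-degree characters $\xi_{1,2}$ (respectively $\eta_{1,2}$) descend and supply the missing zero on the non-split (respectively split) column; a short check confirms this covers the borderline small orders. This establishes the column-zero hypothesis for all $q \geq 4$, whence Proposition \ref{zeros} gives $\mathcal{K}'(PSL_2(q)) = \mathcal{K}(PSL_2(q))$.

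The main obstacle is the two genuinely degenerate cases $q = 2$ and $q = 3$, where $PSL_2(q) \cong S_3$ and $PSL_2(q) \cong A_4$ and the column-zero hypothesis actually fails: the $3$-cycle column of $S_3$ and the involution column of $A_4$ contain no zero, so Proposition \ref{zeros} does not apply. Here I would instead compute $\mathcal{K}'$ directly from the principal ideals generated by each irreducible character in the representation ring, using the lower bound $\mathcal{K}'(G) \geq L(G)/|G|$ of Proposition \ref{lcm} together with the upper bound $\mathcal{K}'(G) \leq \mathcal{K}(G)$, and verifying the claimed value by hand for these two groups. Combining the uniform argument for $q \geq 4$ with this separate treatment of $q \in \{2,3\}$, and substituting the known expression for $\mathcal{K}(PSL_2(q))$, yields the stated formula.
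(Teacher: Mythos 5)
Your proposal follows the same route as the paper: the paper's entire proof consists of the assertion that every non-trivial column of the character table of $PSL_2(q)$ contains a zero (cited from Dornhoff) followed by an appeal to Proposition \ref{zeros}. What you add is an actual verification of that column-zero property, organised by the trichotomy unipotent / split regular semisimple / non-split regular semisimple together with a count of which members of the principal and discrete series descend to $PSL_2(q)$. That verification is correct for $q \geq 4$, and your attention to the borderline cases is well placed: for $q=5$, for instance, no irreducible principal series character descends, and it is indeed the half-degree characters $\xi_{1,2}$ of degree $3$ that supply the zero on the non-split column of $A_5$. So for $q \geq 4$ your argument is a sound (and more self-contained) expansion of what the paper outsources to a citation.

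The one substantive problem is your treatment of $q \in \{2,3\}$. You correctly observe that the column-zero hypothesis fails for $PSL_2(2) \cong S_3$ (the $3$-cycle column) and $PSL_2(3) \cong A_4$ (the involution column), but your plan to ``verify the claimed value by hand'' for these two groups cannot succeed, because the claimed value is wrong there. Taking $\rho$ to be the unique irreducible character of degree $2$ of $S_3$, every irreducible character $\chi$ satisfies $\chi \otimes \lambda = \rho$ for some virtual $\lambda$ (the two linear characters tensor $\rho$ to $\rho$, and $\rho \otimes \mathbb{1} = \rho$), so $\mathcal{K}'(S_3) = 2/6 = 1/3$; the same argument with the degree-$3$ character of $A_4$ gives $\mathcal{K}'(A_4) = 3/12 = 1/4$. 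Both differ from the value $1$ that the displayed formula demands, so the equality $\mathcal{K}'(PSL_2(q)) = \mathcal{K}(PSL_2(q))$ is actually false for $q = 2, 3$, and the theorem (like the citation in the paper's own proof) must be read as implicitly restricted to $q \geq 4$. You should either record that restriction explicitly or note the failure, rather than promise a hand verification that would come out the other way.
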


\begin{proof}
Every non-trivial column of the character table of $PSL_2(q)$ has a zero entry \cite{dornhoff} so by Proposition \ref{zeros} we have that $\mathcal{K}'(PSL_2(q)) = \mathcal{K}(PSL_2(q))$.
\end{proof}

\begin{proposition}
For $q \geq 4$, we have that
\begin{equation*}
\mathcal{K}'(SL_2(q)) \geq 
    \begin{cases}
      \frac{1}{2} & \text{if $q$ odd,} \\
      1 & \text{if $q$ even.}
    \end{cases}
\end{equation*}
\end{proposition}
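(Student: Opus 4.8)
The plan is to apply Proposition~\ref{lcm}, which gives $\mathcal{K}'(SL_2(q)) \geq L(SL_2(q))/|SL_2(q)|$, and then to compute $L(SL_2(q))$ directly from the known list of irreducible character degrees. Since $|SL_2(q)| = q(q^2-1) = q(q-1)(q+1)$, it suffices to show that $L(SL_2(q))$ equals $q(q-1)(q+1)$ when $q$ is even and $q(q-1)(q+1)/2$ when $q$ is odd. For this I would read off the irreducible character degrees from the standard character table of $SL_2(q)$ \cite{dornhoff}, whose generic form is valid for $q \geq 4$.

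First I would treat the case $q$ even, where the centre is trivial and the irreducible degrees occurring are $1$, $q-1$, $q$ and $q+1$. Here $q-1$ and $q+1$ are both odd and differ by $2$, so their greatest common divisor is $1$; moreover $q$ is coprime to each of $q \pm 1$. Hence the three nontrivial degrees are pairwise coprime, giving $L(SL_2(q)) = q(q-1)(q+1) = |SL_2(q)|$, and Proposition~\ref{lcm} yields $\mathcal{K}'(SL_2(q)) \geq 1$.

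Next I would treat the case $q$ odd, where the degrees occurring are $1$, $q-1$, $q$, $q+1$ together with the two half-degrees $\tfrac{q-1}{2}$ and $\tfrac{q+1}{2}$. The half-degrees divide $q-1$ and $q+1$ respectively, so they contribute nothing new and $L(SL_2(q)) = \operatorname{lcm}(q-1,\,q,\,q+1)$. The key arithmetic point is that for $q$ odd both $q-1$ and $q+1$ are even, so $\gcd(q-1,q+1)=2$, giving $\operatorname{lcm}(q-1,q+1) = (q-1)(q+1)/2$. Since $q$ is coprime to $q^2-1$, I obtain $L(SL_2(q)) = q(q-1)(q+1)/2 = |SL_2(q)|/2$, and Proposition~\ref{lcm} gives $\mathcal{K}'(SL_2(q)) \geq \tfrac{1}{2}$.

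There is no genuine obstacle here beyond the elementary greatest-common-divisor computations; the only point requiring care is the parity distinction $\gcd(q-1,q+1) \in \{1,2\}$, which is precisely what produces the factor of $\tfrac{1}{2}$ in the odd case and its absence in the even case. The hypothesis $q \geq 4$ is used only to ensure that the generic form of the character table applies, so that all the listed degrees genuinely occur and are positive.
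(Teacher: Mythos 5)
Your proposal is correct and follows exactly the route the paper takes: invoke Proposition~\ref{lcm} and compute $L(SL_2(q))$ from the character degrees in \cite{dornhoff}. The only difference is that you spell out the elementary $\gcd$ computations (in particular $\gcd(q-1,q+1)=2$ for $q$ odd versus $1$ for $q$ even) that the paper leaves implicit when it simply reads off the value of $L(SL_2(q))$ from the character table.
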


\begin{proof}
We can see from the character tables of these groups \cite{dornhoff} that for $q \geq 4$
\begin{equation*}
  L(SL_2(q)) =
    \begin{cases}
      \frac{(q+1) q (q-1)}{2} & \text{if $q$ odd,} \\
      (q+1) q (q-1) & \text{if $q$ even.}
    \end{cases} 
\end{equation*}

So the result follows by Proposition \ref{lcm}.    
\end{proof}

\begin{theorem}
For $q \geq 4$, we have that $\mathcal{K'}(SL_2(q)) = 1$.
\end{theorem}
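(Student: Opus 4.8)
The plan is to split on the parity of $q$. When $q$ is even we have $SL_2(q)=PSL_2(q)$, the quoted table gives $\mathcal{K}(SL_2(q))=1$, and since $\mathcal{K}'(G)\leq\mathcal{K}(G)$ always while Proposition \ref{lcm} gives $\mathcal{K}'(SL_2(q))\geq 1$ (here $L(G)=|G|$), equality is immediate. All the content lies in the case $q$ odd, $q\geq 5$, where $\mathcal{K}=2$ and Proposition \ref{lcm} yields only $\mathcal{K}'\geq\tfrac12$. Here the centre $Z=\{\pm I\}$, with nontrivial element $z=-I$, is the whole story. First I would record that every column of the character table indexed by a noncentral $g\neq\pm I$ contains a zero (the same input used for $PSL_2(q)$, read off from \cite{dornhoff}), whereas the column of $z$ has \emph{no} zero because $\chi(z)=\pm\chi(\text{id})$ for every irreducible $\chi$. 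Arguing exactly as in Proposition \ref{zeros}, any character $\rho$ inverting all irreducibles must then vanish off $Z$, so $\rho$ lies in the two-dimensional space of class functions supported on $\{I,z\}$.

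To coordinatise that space I would split $\text{Irr}(G)$ by central sign: let $\text{Irr}_+$ be the irreducibles with $\chi(z)=\chi(\text{id})$ and $\text{Irr}_-$ those with $\chi(z)=-\chi(\text{id})$, and set $\rho_\pm=\sum_{\chi\in\text{Irr}_\pm}\chi(\text{id})\,\chi$. Both are genuine characters of degree $|G|/2$; $\rho_+$ is the inflation of the regular character $\rho_{\text{reg}}^{PSL_2(q)}$ of $PSL_2(q)$, and $\rho_-=\rho_{\text{reg}}-\rho_+$. A direct check on $\{I,z\}$ gives the multiplication rules $\chi\otimes\rho_\pm=\chi(\text{id})\rho_\pm$ for $\chi\in\text{Irr}_+$ and $\chi\otimes\rho_\pm=\chi(\text{id})\rho_\mp$ for $\chi\in\text{Irr}_-$, reflecting that tensoring preserves or swaps the central sign. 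Since every class function supported on $Z$ is uniquely $a\rho_++b\rho_-$, and since $\rho$ must be a genuine character, the task becomes: minimise the degree $\tfrac{|G|}{2}(a+b)$ over those $a\rho_++b\rho_-$ inverting every irreducible. Proving $\mathcal{K}'=1$ is exactly showing the minimum is attained at $a+b=2$.

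For the lower bound I would project the defining identity $\chi\otimes\lambda=\rho$ onto the two sectors. Writing $\lambda=\lambda_++\lambda_-$ for the decomposition of a virtual $\lambda$ into its $\text{Irr}_\pm$-parts (each still virtual), the multiplication rules turn $\chi\otimes\lambda=a\rho_++b\rho_-$ into a pair of independent equations, one in each sector. For $\chi\in\text{Irr}_+$ the $+$-equation reads $\chi\otimes\lambda_+=a\rho_+$, which under the inflation isomorphism $\langle\text{Irr}_+\rangle\cong R(PSL_2(q))$ says every irreducible of $PSL_2(q)$ inverts $a\,\rho_{\text{reg}}^{PSL_2(q)}$, forcing $a\geq\mathcal{K}(PSL_2(q))$; when $q\neq 2^n\pm1$ this already gives $a\geq 2$ and hence $\rho(\text{id})\geq|G|$. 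The delicate remaining input comes from $\text{Irr}_-$: for the split conjugate pair of characters of degree $(q\pm1)/2$ — the ones whose values involve the quadratic Gauss sum $\sqrt{\pm q}$, and which are precisely responsible for $\mathcal{K}(SL_2(q))=2$ — the relevant sector-equation is solvable by a virtual class function only after the extra integral unit carried by that $\sqrt{\pm q}$ is absorbed, which forces the total central content $a+b$ up to $2$. Establishing this integrality obstruction cleanly, uniformly in $q$, is the heart of the lower bound.

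For the upper bound I would exhibit the candidate $\rho=2\rho_+$, of degree $|G|$. The plus-types are handled at once: since $\mathcal{K}(PSL_2(q))\leq 2$, every irreducible of $PSL_2(q)$ inverts $2\rho_{\text{reg}}^{PSL_2(q)}$, and inflating the inverting character gives $\chi\otimes\lambda=2\rho_+$ for all $\chi\in\text{Irr}_+$. For $\chi\in\text{Irr}_-$ the rules reduce the problem to solving $\chi\otimes\lambda_-=2\rho_+$ in the minus sector; the naive candidate $\lambda_-=\tfrac{2}{\chi(\text{id})}\rho_-$ satisfies the equation but is only a rational combination of $\text{Irr}_-$, so I would correct it on the (many) classes where $\chi$ vanishes — modifications there do not change the product — to land in the virtual character lattice. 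The only place this correction is nontrivial is, once again, the split Gauss-sum pair, and the point is that the factor $2$ in $2\rho_+$ is exactly what renders the residual $\sqrt{\pm q}$-term integral, so the correction exists. This is the step I expect to be the main obstacle: it must be carried out from the explicit character table of $SL_2(q)$, and it is the precise mechanism by which $\mathcal{K}(SL_2(q))=2$ collapses to $\mathcal{K}'(SL_2(q))=1$. Combining the two bounds with the even case completes the proof.
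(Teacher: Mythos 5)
Your reduction is the right one and it matches the paper's: for odd $q$ any inverting character $\rho$ must be supported on the centre, hence has the form $a\rho_+ + b\rho_-$, and the candidate achieving $\mathcal{K}'=1$ is exactly the paper's $\rho = 2\rho_+$, i.e.\ $\bigoplus 2\chi(\text{id})\chi$ over the $\chi$ with $\chi(-\text{id})=\chi(\text{id})$. But both halves of your odd-$q$ argument stop precisely where the mathematical content begins, and you flag this yourself. For the upper bound, the assertion that the rational solution $\tfrac{2}{\chi(\text{id})}\rho_-$ can be ``corrected on the classes where $\chi$ vanishes'' to land in the virtual character lattice \emph{is} the theorem for the minus sector; nothing you have written rules out that the required correction fails to be integral for some family of characters or some $q$. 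The paper discharges this by exhibiting an explicit $\rho$-inverse for every irreducible (seven families, split into two cases according to $q \bmod 4$), and without that verification, or some substitute for it, the bound $\mathcal{K}'(SL_2(q)) \leq 1$ is not proved.

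For the lower bound the gap is more consequential, because your sector projection genuinely fails to close the argument for infinitely many $q$: when $q = 2^n \pm 1$ is odd you only obtain $a \geq \mathcal{K}(PSL_2(q)) = 1$, and the promised ``integrality obstruction from the Gauss-sum characters'' that is supposed to force $a+b \geq 2$ is named but never established. The paper avoids this analysis entirely with a short swap argument worth adopting: if every irreducible were $\rho^{+}$-invertible (the case $a+b=1$; the case $\rho^{-}$ is symmetric), pick $\theta$ of degree $q-1$ (or $q+1$, according to $q \bmod 4$) with the fewest zeros and $\theta \otimes \lambda = \rho^{+}$, and let $\theta'$ be an irreducible of the same degree with $\theta'(-\text{id}) = -\theta(-\text{id})$, vanishing wherever $\theta$ does; then $\theta' \otimes \lambda = \rho^{-}$, so adding a $\rho^{+}$-inverse $\lambda'$ of $\theta'$ gives $\theta' \otimes (\lambda \oplus \lambda') = \rho^{+} \oplus \rho^{-} = \rho_\text{reg}$, contradicting the fact, already known from $\mathcal{K}(SL_2(q)) = 2$, that no character of that degree is regular-invertible. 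Replacing your unproved integrality step with this argument (or actually carrying out the Gauss-sum computation uniformly in $q$) is necessary before the proof is complete.
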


\begin{proof}
The result follows directly for $q$ even by the previous proposition. Now suppose that $q \geq 5$ is odd. We know that some characters are not regular invertible. However, we can show that all irreducible characters are $\rho$-invertible where 
$$ \rho = {\bigoplus_{\substack{\chi \in \text{Irr}(G) \\
\chi(-\text{id}) = \chi(\text{id})}}} 2 \ \chi(\text{id}) \ \chi $$ 
    
The values of this character are
$$ \rho(g) = 
\begin{cases}
    |G| & \text{if } g = \pm \text{id} \\
    0 & \text{otherwise}
\end{cases} $$

 We can construct an explicit $\rho$-inverse for each irreducible character depending on $q$ modulo $4$. Let us first introduce the following notation for the irreducible characters of $SL_2(q)$ \cite{dornhoff}. Let $\mathbb{1}_G$ be the character of degree $1$, $\eta_1, \eta_2$ of degree $\frac{q-1}{2}$, $\xi_1, \xi_2$ of degree $\frac{q+1}{2}$, $\theta_j$ of degree $q-1$, $\Psi$ of degree $q$ and $\chi_i$ of degree $q+1$, where $1 \leq j \leq (q-1)/2$, $1 \leq i \leq (q-3)/2$ and the even indices correspond to the characters that restrict to $PSL_2(q)$.

\begin{table}[htbp]
\centering
\renewcommand{\arraystretch}{1.5}
\begin{tabular}{|c|c|c|}
\hline
$\rho$-inverses for & If $q \equiv 1$ mod $4$ & If $q \equiv 3$ mod $4$ \\
\hline
$\eta_1, \eta_2$ & $ 2 \eta_1 + 2 \eta_2 + 4 \sum_{j \ \text{odd}} \theta_j + (q+1) \chi_1 $ & $ (q-1) \mathbb{1}_G + 2 \eta_1 + 2 \eta_2 + 4 \sum_{j \ \text{even}} \theta_j + (q+3) \Psi $ \\
$\xi_1, \xi_2$ &  $4 \mathbb{1}_G + 2 \xi_1 + 2 \xi_2 + (q+1) \theta_2 + 4 \sum_{i \ \text{even}} \chi_i$ & $ 2 \xi_1 + 2 \xi_2 + (q-1) \theta_1 + 4 \sum_{i \ \text{odd}} \chi_i $ \\
$\theta_j$ with $j$ odd & $ \eta_1 + \eta_2 + 2 \sum_{j \ \text{odd}} \theta_j + \frac{q+1}{2} \chi_1 $ & $ \frac{q+1}{2} \xi_1 + \frac{q+1}{2} \xi_2 + 2 \sum_{j \ \text{odd}} \theta_j $ \\
$\theta_j$ with $j$ even & $-2 \mathbb{1}_G + \frac{q+3}{2} \xi_1 + \frac{q+3}{2} \xi_2 + 2 \sum_{j \ \text{even}} \theta_j$  & $ \frac{q-1}{2} \mathbb{1}_G + \eta_1 + \eta_2 + 2 \sum_{j \ \text{even}} \theta_j + \frac{q+3}{2} \Psi $ \\
$\Psi$ & $ -2 \mathbb{1}_G + 4 \sum_{j \ \text{even}} \theta_j + 2 \Psi $ & $ -2 \mathbb{1}_G + 2 \eta_1 + 2 \eta_2 + 4 \sum_{j \ \text{even}} \theta_j + 2 \Psi $ \\
$\chi_i$ with $i$ odd & $ \frac{q-1}{2} \eta_1 + \frac{q-1}{2} \eta_2 + 2 \sum_{i \ \text{odd}} \chi_i $ & $ \xi_1 + \xi_2 + \frac{q-1}{3} \chi_1 + 2 \sum_{i \ \text{odd}} \chi_i $ \\
$\chi_i$ with $i$ even & $ 2 \mathbb{1}_G + \xi_1 + \xi_2 + \frac{q+1}{2} \theta_2 + 2 \sum_{i \ \text{even}} \chi_i $ & $ 2 \mathbb{1}_G + \frac{q+1}{2} \eta_1 + \frac{q+1}{2} \eta_2 + 2 \sum_{i \ \text{even}} \chi_i $ \\
\hline
\end{tabular}
\end{table}

So we conclude that $\mathcal{K'}(SL_2(q)) \leq 1$.

We now show that $\mathcal{K'}(SL_2(q)) \geq 1$. Note that every column in the character table of $SL_2(q)$ has a zero, except the columns corresponding to $\pm \text{id}$. So if all characters are $\rho$-invertible, we must have that $\rho(g) = 0$ whenever $g \neq \pm \text{id}$. The only characters satisfying this and of degree smaller than $|SL_2(q)|$ are
$$ \rho^{\pm} (g) = \begin{cases}
    \ \ |G|/2 & \text{if } g = \text{id}, \\
    \pm |G|/2 & \text{if } g = - \text{id}, \\
    \quad 0 & \text{otherwise.}
\end{cases} $$

\newpage
Note that $\rho^+ \oplus \rho^- = \rho_\text{reg}$, where $\rho^+$ is the sum of all irreducible constituents of $\rho_{reg}$ that are positive when evaluated at $-\text{id}$ and $\rho^-$ is the sum of those which are negatives when evaluated at $-\text{id}$.

Suppose that all irreducible characters are $\rho^\pm$-invertible. For $q \equiv 1$ mod $4$, it follows from the character table that we can choose $\theta_1$ and $\theta_2$ to be two irreducible characters of degree $q-1$ such that $\theta_1(-\text{id}) = - \theta_2(-\text{id})$. By our assumptions, they are both $\rho^\pm$-invertible, so there exist $\lambda_1$ and $\lambda_2$ such that
$$ \theta_1 \otimes \lambda_1 = \rho^\pm \quad \text{and} \quad \theta_2 \otimes \lambda_2 = \rho^\pm. $$
But by our choice of $\theta_1$ and $\theta_2$ we must also have that 
$$ \theta_1 \otimes \lambda_2 = \rho^\mp \quad \text{and} \quad \theta_2 \otimes \lambda_1 = \rho^\mp. $$
So
$$ \theta_{1,2} \otimes (\lambda_1 \oplus \lambda_2) = \rho^\pm \oplus \rho^\mp = \rho_\text{reg} \; . $$
This contradicts our previous result that no characters of degree $q-1$ are regular invertible, so not all irreducible characters of $G$ can be $\rho^\pm$-invertible. The same proof works for $q \equiv 3$ mod $4$ by taking a character of degree $q+1$. We conclude that $\mathcal{K'}(SL_2(q)) = 1$.
\end{proof}

\begin{remark}
We can compute the generalised Knutson Indices for $SL_2(2)$ and $SL_2(3)$ directly by finding a lower bound with $L(G)$ and then showing that it is an equality by constructing $\rho$ explicitly. We obtain that $\mathcal{K}'(SL_2(2)) = 1/3$ and $\mathcal{K}'(SL_2(3))=1/2$.
\end{remark}

\section{Alternating and Symmetric Groups}
We will now discuss the generalised Knutson Indices of the alternating and symmetric groups. We have computed that $S_n$ is of Knutson type for $n \leq 16$, so we propose the following conjecture.

\begin{conjecture}
$S_n$ is of Knutson type for every $n$.
\end{conjecture}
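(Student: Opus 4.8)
The plan is to show that every irreducible character $\chi$ of $S_n$ is regular-invertible, i.e. that $\rho_\text{reg} \in \chi \otimes \mathbb{Z}[\text{Irr}(S_n)]$. The starting observation is that $\chi \otimes \rho_\text{reg} = \chi(\text{id})\,\rho_\text{reg}$, so over $\mathbb{Q}$ the class function $\tfrac{1}{\chi(\text{id})}\rho_\text{reg}$ is already a formal inverse, and the whole content of the statement is integral. Writing $Z(\chi) = \{\, g : \chi(g) = 0 \,\}$, the kernel of the pointwise operator $\,\cdot \otimes \chi$ on complex class functions is precisely the space of functions vanishing outside $Z(\chi)$. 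Hence $\chi$ is regular-invertible if and only if there is a virtual character $\lambda$ that vanishes on every non-identity class where $\chi$ does not vanish and satisfies $\lambda(\text{id}) = n!/\chi(\text{id})$. I would therefore reduce the conjecture to the following lattice problem: for each $\chi$, the subgroup of $\mathbb{Z}$ swept out by the identity-values of the virtual characters supported on $\{\text{id}\} \cup Z(\chi)$ must contain $n!/\chi(\text{id})$.

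Two features of $S_n$ make this tractable in principle. First, $S_n$ is a rational group: all character values are integers, so the lattice problem above is an honest problem over $\mathbb{Z}$ and can be attacked one prime at a time. Second, the Murnaghan--Nakayama rule gives complete combinatorial control of the zero set $Z(\chi_\lambda)$, and symmetric groups possess an abundance of irreducible characters vanishing on any prescribed non-identity class. The core step would be a $p$-local analysis: for each prime $p \mid \chi(\text{id})$ one must produce virtual characters supported on $\{\text{id}\} \cup Z(\chi)$ whose identity-values generate the correct power of $p$, and then glue these contributions across the primes dividing $\chi(\text{id})$ by the Chinese Remainder Theorem. The natural toolbox here is $p$-modular representation theory --- Brauer characters, blocks and defect --- together with the combinatorics of $p$-cores and $p$-quotients that governs the $p$-singular vanishing of $\chi_\lambda$.

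An orthogonal and perhaps more robust line of attack is an induction on $n$ via the projection formula. For any subgroup $H \leq S_n$ and any virtual $\mu \in \mathbb{Z}[\text{Irr}(H)]$ one has
\[ \chi \otimes \text{Ind}_H^{S_n} \mu = \text{Ind}_H^{S_n}\big( (\text{Res}_H \chi) \otimes \mu \big), \qquad \text{Ind}_H^{S_n}\rho_\text{reg}^H = \rho_\text{reg}^{S_n}. \]
Consequently it suffices to find a single subgroup $H$ --- a Young subgroup $S_a \times S_b$ or a copy of $S_{n-1}$ being the obvious candidates --- and a virtual character $\mu$ of $H$ with $(\text{Res}_H \chi) \otimes \mu = \rho_\text{reg}^H$; then $\lambda = \text{Ind}_H^{S_n}\mu$ inverts $\chi$. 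Because the branching rule describes $\text{Res}_H \chi$ explicitly and the factors of a Young subgroup are themselves symmetric groups covered by the inductive hypothesis, one can hope to assemble $\mu$ from the regular-inverses of the constituents of $\text{Res}_H \chi$, provided the zero pattern of the restriction is rich enough to absorb the required integral correction.

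The main obstacle is uniformity. For a fixed $n$ both routes reduce to a finite integer-linear-algebra computation --- essentially a Smith normal form of a submatrix of the character table --- which is presumably how the cases $n \leq 16$ are settled. The difficulty is to control these invariants simultaneously for every partition $\lambda \vdash n$, every prime $p \mid \chi_\lambda(\text{id})$, and every $n$ at once: one must guarantee that the zero-supported corrections needed to clear the denominator $\chi_\lambda(\text{id})$ exist at each relevant prime without a case analysis whose complexity grows with $n$. Making the $p$-local production of zero-supported virtual characters effective and $n$-independent --- or, in the inductive approach, verifying that restriction to a Young subgroup always leaves enough vanishing classes to carry the correction --- is the crux on which a full proof of the conjecture hinges.
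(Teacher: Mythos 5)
This statement is a \emph{conjecture} in the paper, not a theorem: the author offers no proof, only the remark that $S_n$ has been verified computationally to be of Knutson type for $n \leq 16$. Your submission is likewise not a proof but a research programme, and you say as much in your final paragraph. The initial reduction is sound and worth keeping: since tensoring with $\chi$ acts pointwise on class functions, $\chi \otimes \lambda = \rho_\text{reg}$ forces $\lambda$ to vanish on every non-identity class outside $Z(\chi)$ and to satisfy $\lambda(\text{id}) = n!/\chi(\text{id})$ (an integer, since $\chi(\text{id})$ divides $|G|$), so the conjecture is equivalent to a statement about the lattice of virtual characters supported on $\{\text{id}\} \cup Z(\chi)$. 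The projection-formula identities $\chi \otimes \text{Ind}_H^{S_n}\mu = \text{Ind}_H^{S_n}((\text{Res}_H\chi)\otimes\mu)$ and $\text{Ind}_H^{S_n}\rho_\text{reg}^H = \rho_\text{reg}^{S_n}$ are also correct. But neither route is carried out, and the crux you defer is exactly where the difficulty lives.

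Two concrete gaps. First, in the $p$-local approach you never exhibit, for a general $\lambda \vdash n$ and a general prime $p \mid \chi_\lambda(\text{id})$, any zero-supported virtual character whose identity-value has the required $p$-adic valuation; the Murnaghan--Nakayama control of $Z(\chi_\lambda)$ tells you where characters vanish, but not that the resulting lattice is large enough, and the paper's own counterexamples ($SL_2(5)$, and $A_n$ for $n = 12, 13, 15$) show that such lattice obstructions genuinely occur in nearby groups, so something specific to $S_n$ must be identified and is not. Second, the inductive route has a logical flaw beyond mere incompleteness: $\text{Res}_H\chi$ is reducible in general, and regular-invertibility of each irreducible constituent does not imply regular-invertibility of their sum (the zero sets of the constituents need not overlap), so the inductive hypothesis for $S_{n-1}$ or $S_a \times S_b$ does not feed into the induction as stated. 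As it stands the proposal neither proves the conjecture nor goes beyond what the paper already records, namely a finite computation for small $n$ and the expectation that the pattern persists.
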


This conjecture would imply that $\mathcal{K}'(S_n) \leq 1$. It is possible to show that $\mathcal{K}(S_n) \leq \mathcal{K}(A_n) \leq 2 \cdot \mathcal{K}(S_n)$. We will now study sufficient conditions on $n$ for $\mathcal{K}'(S_n) \geq 1$.

Recall that irreducible characters of $S_n$ are in bijective correspondence with the Young tableaux of size $n$ \cite[Chapter 2]{james}. The hook length $H_{i,j}$ of the box $(i, j)$ is the size of the set containing the boxes at $\{(k, j) | k \geq i\} \cup \{(i, k) | k \geq j\}$. In other words, the number of boxes directly below or to the right plus the box itself. It is known that
the degree of the irreducible character corresponding to a Young tableau is 
$$ \frac{n!}{\prod_{i,j} H_{i,j}} \;.$$ 

\begin{definition}
A $t$-core partition of $n$ is a partition such that none of the hook lengths is a multiple of $t$.
\end{definition}

\begin{proposition}
$L(S_n) = n!$ if and only if for every prime $p$ there exists a $p$-core partition of $n$.
\end{proposition}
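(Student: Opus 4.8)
The plan is to reduce the single multiplicative identity $L(S_n) = n!$ to a family of local conditions, one for each prime, via the $p$-adic valuation $v_p$. Since every irreducible character degree has the form $d_\lambda = n!/\prod_{i,j} H_{i,j}$ for a partition $\lambda$ of $n$, each $d_\lambda$ divides $n!$, and hence so does their least common multiple $L(S_n)$. Therefore $L(S_n) = n!$ if and only if $v_p(L(S_n)) = v_p(n!)$ for every prime $p$, and I would verify this local equality prime by prime.

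I would then compute $v_p(L(S_n))$ explicitly. Because the valuation of an lcm is the maximum of the valuations, and $v_p$ is additive over products,
$$ v_p(L(S_n)) = \max_{\lambda \vdash n} v_p(d_\lambda) = \max_{\lambda \vdash n} \left( v_p(n!) - \sum_{i,j} v_p(H_{i,j}) \right) = v_p(n!) - \min_{\lambda \vdash n} \sum_{i,j} v_p(H_{i,j}). $$
So the desired equality $v_p(L(S_n)) = v_p(n!)$ is equivalent to $\min_{\lambda \vdash n} \sum_{i,j} v_p(H_{i,j}) = 0$, that is, to the existence of a partition of $n$ whose hook-length product is coprime to $p$.

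The final step is to identify this arithmetic condition with the combinatorial one. Since each $v_p(H_{i,j})$ is a nonnegative integer, the sum $\sum_{i,j} v_p(H_{i,j})$ vanishes exactly when no hook length $H_{i,j}$ is divisible by $p$, which is precisely the defining condition of a $p$-core partition of $n$. Assembling the three reductions yields the claim: $L(S_n) = n!$ iff for every prime $p$ some partition of $n$ has product of hook lengths prime to $p$, iff for every prime $p$ there exists a $p$-core partition of $n$.

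I do not expect a genuine obstacle; once the hook-length formula is in hand, the argument is bookkeeping with valuations. The only point needing a word of care is the range of primes: for $p > n$ one has $v_p(n!) = 0$ automatically, and every partition is vacuously a $p$-core since no hook length of a partition of $n$ can exceed $n$, so the equivalence is trivial there and all the content sits with the primes $p \le n$.
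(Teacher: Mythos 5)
Your argument is correct and is exactly the standard valuation bookkeeping that the paper leaves implicit: the proposition is stated there without proof, as an immediate consequence of the hook length formula given just before it. Your write-up simply supplies the omitted details (that $L(S_n)\mid n!$ and that $v_p(L(S_n))=v_p(n!)$ for all $p$ iff a $p$-core of $n$ exists for all $p$), and is sound.
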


We are therefore interested in finding positive integers $n$ such that $n$ admits a p-core partition for every prime $p$.

\begin{lemma}
There exists a $2$-core partition of $n$ if and only if $n$ is a triangular number.
\end{lemma}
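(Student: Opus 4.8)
The plan is to prove that the $2$-core partitions are precisely the staircase partitions $\delta_k = (k, k-1, \ldots, 2, 1)$ for $k \geq 0$ (with $\delta_0$ the empty partition), and then to observe that $|\delta_k| = k(k+1)/2$ is exactly the $k$-th triangular number, so that a $2$-core of $n$ exists if and only if $n$ is triangular.

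First I would dispatch the easy direction: every staircase is a $2$-core. Since $\delta_k$ is self-conjugate, the box $(i,j)$ with $j \le k - i + 1$ has arm length $k-i+1-j$ and leg length $k-j+1-i$, so its hook length is
$$ H_{i,j} = (k - i + 1 - j) + (k - j + 1 - i) + 1 = 2(k - i - j + 1) + 1, $$
which is odd; hence no hook length is a multiple of $2$. Moreover $|\delta_k| = 1 + 2 + \cdots + k = k(k+1)/2$, and as $k$ ranges over the non-negative integers this realises every triangular number. This already gives the ``if'' direction.

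The substance is the converse: a $2$-core must be a staircase. Here the key steps are as follows. Step (1): all parts differ by at most one. Fixing a row $i$, for $\lambda_{i+1} < j \le \lambda_i$ the box $(i,j)$ has no box below it, so its leg length is $0$ and its hook length equals $\lambda_i - j + 1$; as $j$ runs over this range these hook lengths take every value in $\{1, 2, \ldots, \lambda_i - \lambda_{i+1}\}$, and since they must all be odd we get $\lambda_i - \lambda_{i+1} \le 1$ (and, taking $i$ equal to the number of parts $k$ with $\lambda_{k+1} = 0$, that $\lambda_k = 1$). Step (2): invoke conjugation. Because $H_{i,j}(\lambda) = H_{j,i}(\lambda')$, the conjugate $\lambda'$ is again a $2$-core, so the same argument yields $\lambda'_j - \lambda'_{j+1} \le 1$ for all $j$. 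Step (3): rule out equal consecutive parts. If $\lambda_i = \lambda_{i+1} = c$, then column $c$ contains at least $i+1$ boxes while column $c+1$ contains at most $i-1$, forcing $\lambda'_c - \lambda'_{c+1} \ge 2$ and contradicting step (2). Hence the parts strictly decrease by exactly $1$ down to $\lambda_k = 1$, i.e. $\lambda = \delta_k$.

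I expect step (3) to be the crux. The naive hope---that the condition on the first-column hooks $\lambda_i + k - i$ already pins down the shape---fails (for instance $(5,2,1)$ has all odd first-column hooks but an even hook elsewhere), so the argument genuinely needs information from the whole diagram, and passing to the conjugate is exactly what supplies the missing constraint that turns ``steps at most $1$'' into ``steps exactly $1$.'' Combining the two directions yields that a $2$-core partition of $n$ exists if and only if $n = k(k+1)/2$ for some $k$, that is, if and only if $n$ is triangular.
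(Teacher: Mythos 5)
Your proof is correct and takes essentially the same approach as the paper: both directions reduce to showing that the $2$-core partitions are exactly the staircases, via the observation that consecutive parts of a $2$-core differ by at most one and cannot be equal. The only cosmetic difference is that the paper rules out equal consecutive parts by exhibiting a hook of length $2$ directly in the last row of a repeated part length, whereas you pass to the conjugate partition; you also spell out the ``if'' direction (odd hook lengths of the staircase), which the paper leaves implicit.
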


\begin{proof}
Suppose that $\lambda = (\lambda_1, \ldots, \lambda_r)$ is a $2$-core partition of $n$. We must have that $\lambda_i - \lambda_{i+1} \leq 1$ for all $i$'s, as otherwise, the second last box of $\lambda_i$ has hook length $2$. Note also that if $\lambda_i = \lambda_{i+1}$, then the second last row of this length ends with a box of hook length two. So we conclude that the partition is of the form $(r, r-1, \ldots, 1)$ and $n$ must be a triangular number.

Conversely, if $n$ is triangular, then $n = \sum_{i=1}^r i$ for some positive integer $r$. We obtain the partition $\lambda = (r, r-1,\ldots, 1)$ of $n$ and since all hook lengths of this partition are odd, $\lambda$ is a $2$-core.
\end{proof}

\begin{definition}
Löschian numbers are integers that can be written in the form $X^2 + XY + Y^2$ for some integers $X$ and $Y$.
\end{definition}

\begin{lemma}
There exists a $3$-core partition of $n$ if and only if $3n + 1$ is a Löschian number.
\end{lemma}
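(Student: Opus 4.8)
The plan is to encode partitions by their first-column hook lengths (beta-sets) and read off the $3$-core condition on a $3$-runner abacus. Writing the beta-set of $\lambda$ with a bead count $r$ divisible by $3$ and distributing the beads across the three runners according to their residue modulo $3$, the requirement that no hook length be divisible by $3$ translates into the beads on each runner being pushed to the top. Such a configuration is determined by the numbers $m_0, m_1, m_2$ of beads on the runners, and after normalising $n_j = m_j - r/3$ (so that $n_0 + n_1 + n_2 = 0$) one obtains a bijection between $3$-cores and charge vectors $(n_0, n_1, n_2) \in \mathbb{Z}^3$ with $n_0 + n_1 + n_2 = 0$. Computing $\sum_{b} b - \binom{r}{2}$ from the bead positions then yields the Garvan--Kim--Stanton size formula
\[ |\lambda| = \tfrac{3}{2}\bigl(n_0^2 + n_1^2 + n_2^2\bigr) + n_1 + 2 n_2 , \]
which I would either cite or verify directly from the bead-counting above.

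With this in hand the statement becomes purely algebraic. Eliminating $n_0 = -n_1 - n_2$ gives $|\lambda| = 3(n_1^2 + n_1 n_2 + n_2^2) + n_1 + 2 n_2$, and hence
\[ 3|\lambda| + 1 = 9\bigl(n_1^2 + n_1 n_2 + n_2^2\bigr) + 3 n_1 + 6 n_2 + 1 = X^2 + XY + Y^2, \qquad X = 3 n_1, \quad Y = 3 n_2 + 1 . \]
This settles the forward direction at once: any $3$-core of $n$ produces integers $X, Y$ exhibiting $3n+1$ as a Löschian number.

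For the converse I would run the construction backwards. Given any representation $3n + 1 = X^2 + XY + Y^2$, I need one in which $X \equiv 0$ and $Y \equiv 1 \pmod 3$, for then $n_1 = X/3$ and $n_2 = (Y-1)/3$ define an integer charge vector, and the associated $3$-core has size $n$ because $3 \cdot (\text{size}) + 1 = X^2 + XY + Y^2 = 3n+1$. To arrange the congruences I would use the automorphism group of the form $X^2 + XY + Y^2$, which preserves the represented value; in fact the order-six rotation $(X,Y) \mapsto (-Y, X+Y)$ alone cyclically permutes the six residue classes $(X,Y) \bmod 3$ satisfying $X^2 + XY + Y^2 \equiv 1$, so any representation can be transported to one congruent to $(0,1) \pmod 3$. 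This closes the equivalence.

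The main obstacle is the bookkeeping in the first step: pinning down the abacus conventions so that the constant term of the size formula is exactly $n_1 + 2 n_2$ rather than a shifted variant, since an error there would misalign the completion $Y = 3 n_2 + 1$ and break the residue argument. The transitivity of the rotation on the six value-$1$ residues modulo $3$ is then only a finite check, but it is the crux of the converse and I would verify it explicitly. As an independent cross-check I would compare against the generating-function identity $\prod_{k \geq 1} (1-q^{3k})^3/(1-q^k) = \sum_n a_3(n)\, q^n$ together with the divisor-sum formula $a_3(n) = \sum_{d \mid 3n+1} \left(\frac{d}{3}\right)$, whose positivity is equivalent to $3n+1$ being Löschian; this both confirms the statement and refines it to an exact count of $3$-cores.
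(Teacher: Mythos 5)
Your proof is correct, but it takes a genuinely different route from the paper. The paper's proof is short and non-constructive: it quotes the exact counting formula $c_3(n) = \sum_{d \mid 3n+1} \left(\frac{d}{3}\right)$ (for $3n+1 \not\equiv 0 \bmod 3$) and then observes that positivity of this divisor sum is equivalent to every prime $p \equiv 2 \bmod 3$ occurring to an even power in $3n+1$, which is the standard characterisation of Löschian numbers. You instead work bijectively: the Garvan--Kim--Stanton parametrisation of $3$-cores by charge vectors, the substitution $X = 3n_1$, $Y = 3n_2 + 1$ turning the size formula into $3n+1 = X^2 + XY + Y^2$, and for the converse the observation that the order-six isometry $(X,Y) \mapsto (-Y, X+Y)$ acts transitively on the six residue classes mod $3$ with $X^2+XY+Y^2 \equiv 1$ (I checked: the orbit of $(0,1)$ is $(0,1) \to (2,1) \to (2,0) \to (0,2) \to (1,2) \to (1,0)$, so this is right). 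Your argument is constructive --- it produces an explicit $3$-core from a representation of $3n+1$ and vice versa --- and it avoids invoking the multiplicative characterisation of Löschian numbers entirely, at the cost of needing the abacus machinery; the paper's route is shorter given the cited counting formula and yields the exact number of $3$-cores as a by-product (which you note only as a cross-check). One small point to make explicit if you write this up: the converse implicitly uses that \emph{every} integer charge vector $(n_0,n_1,n_2)$ with $n_0+n_1+n_2=0$ corresponds to an actual $3$-core, which is part of the Garvan--Kim--Stanton bijection and should be stated, since otherwise you would also owe an argument that the size formula is nonnegative. Your change of variables is also pleasingly parallel to the substitution $A = X-Y$, $B = X+2Y+1$ that the paper uses later to relate $n = X^2+X+XY+Y+Y^2$ to $3n+1$ being Löschian.
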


\begin{proof}
Let $c_3(n)$ be the number of $3$-core partitions of $n$. It is known \cite{granville} that $c_3(n) = \sigma_3(3n+1)$ where \begin{equation*}
  \sigma_3(n) =
    \begin{cases}
      0 & \text{if $n \equiv 0$ mod $3$}, \\
      \sum_{d|n} \left(\frac{d}{3}\right) & \text{if $n \equiv 1, 2$ mod $3$}.
    \end{cases}      
\end{equation*}

So $c_3(n) > 0$ if and only if in the prime factorisation of $3n + 1$, every prime $p \equiv 2$ mod $3$ appears with even multiplicity. These are precisely the Löschian numbers \cite{marshall}.
\end{proof}

\begin{lemma}
For a prime $p \geq 5$, there always exists a $p$-core partition of $n$.
\end{lemma}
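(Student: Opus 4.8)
The plan is to pass from partitions to the abacus description of $p$-cores and thereby turn the existence question into a statement about representing integers by a positive-definite quadratic form. First I would recall the standard parametrisation (via first-column hook lengths, i.e. the $p$-abacus with all beads pushed up): the $p$-core partitions of $n$ are in bijection with integer vectors $\vec a = (a_0, \dots, a_{p-1})$ satisfying $\sum_{i=0}^{p-1} a_i = 0$, the size being $n = \tfrac{p}{2}\sum_i a_i^2 + \sum_i i\, a_i$. Under this bijection the lemma is equivalent to the assertion that the quadratic map $Q(\vec a) = \tfrac{p}{2}\sum_i a_i^2 + \sum_i i\, a_i$, restricted to the hyperplane $\sum_i a_i = 0$, is surjective onto $\mathbb{Z}_{\ge 0}$.

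Next I would record the small values explicitly. For a single raised bead $\vec a = e_j - e_k$ one computes $Q = p + (j-k)$, so letting $j,k$ range over $\{0,\dots,p-1\}$ realises every integer in $\{1,\dots,p-1\}\cup\{p+1,\dots,2p-1\}$, together with $Q=0$ for $\vec a = 0$; the few remaining small values (such as $n=p$) are covered by similarly explicit partitions. This disposes of the base cases $0 \le n < 2p$. To reach large $n$ I would complete the square: setting $x_i = p\,a_i + i - \tfrac{p-1}{2}$ gives $\sum_i x_i = 0$, the congruences $x_i \equiv i - \tfrac{p-1}{2} \pmod p$, and $\sum_i x_i^2 = 2pn + \tfrac{p(p^2-1)}{12}$. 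Thus producing a $p$-core of $n$ amounts to finding a lattice point, in a fixed coset of the root lattice $A_{p-1}$, of prescribed norm $2pn + \tfrac{p(p^2-1)}{12}$.

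For a prime $p \ge 4$, i.e. $p \ge 5$, this coset lives in a positive-definite lattice of rank $p-1 \ge 4$, and here I would invoke the classical representability theorems for quadratic forms in at least four variables: such a form represents every sufficiently large integer subject only to the (here automatically satisfied) congruence conditions defining the coset. Combining this with the explicit small-$n$ constructions then covers every $n$. The main obstacle is precisely the passage from ``all sufficiently large $n$'' to ``all $n$'': quaternary and higher forms may a priori omit finitely many values sporadically, so one must either make the representability bound effective and verify the remaining mid-range values directly, or appeal to the sharper positivity result of Granville and Ono, which establishes that the number of $p$-cores of $n$ is positive for all $p \ge 4$. It is exactly this extra rank that removes the genuine obstructions seen for $p = 2,3$, where the rank drops to $1$ and $2$ and one is forced into the triangular-number and Löschian-number conditions of the previous two lemmas.
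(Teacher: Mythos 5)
Your reduction is the right one, and it is essentially the skeleton of the argument in the literature: the Garvan--Kim--Stanton bijection identifies $p$-cores of $n$ with vectors $\vec a \in \mathbb{Z}^p$, $\sum a_i = 0$, of ``size'' $\tfrac{p}{2}\sum a_i^2 + \sum i\,a_i$, and completing the square turns this into counting lattice points of norm $2pn + \tfrac{p(p^2-1)}{12}$ in a fixed coset of $A_{p-1}$. Your small-case computation with $\vec a = e_j - e_k$ is also correct. But be aware that the paper does not prove this lemma at all: it simply cites Garvan--Kim--Stanton for $p=5,7$, Ono for $p=11$, and Granville--Ono for $p \ge 13$, so there is no ``paper proof'' to match your sketch against.

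The genuine gap is exactly the one you name in your last third, and naming it does not close it. The classical representability theorems for positive-definite forms in $\ge 4$ variables give only an \emph{ineffective} ``all sufficiently large integers satisfying the local conditions,'' so your argument as written proves the lemma for all $n$ beyond an unspecified bound, with no mechanism for the mid-range. Moreover, ``the congruence conditions are automatically satisfied'' is asserted rather than checked: local representability of $2pn + \tfrac{p(p^2-1)}{12}$ by the coset at every prime (including $q = p$ and $q = 2,3$, which divide the discriminant data) is precisely the computation Granville and Ono carry out, and it is not free. Your proposed escape routes are either (i) make the bound effective and verify the remaining values --- which is the actual hard content of the cited papers, not a routine check --- or (ii) ``appeal to the sharper positivity result of Granville and Ono,'' which is to assume the lemma. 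So the proposal should be regarded as a correct and well-motivated reduction plus a citation, i.e.\ the same status as the paper's own treatment, rather than a self-contained proof; if you want to present it, state honestly that the final step is quoted from Granville--Ono (and, for $p = 5, 7, 11$, from the exact formulas of Garvan--Kim--Stanton and Ono).
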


The cases $p=5$ and $p=7$ have been proven in \cite{garvan}, for $p = 11$ in \cite{ono94} and for $p \geq 13$ in \cite{granville}. \\

\begin{theorem}
There exist infinitely many integers $n$ such that $L(S_n) = n!$ and this sequence is precisely the intersection of the sequence of triangular numbers and the sequence of numbers of the form $X^2 + X + XY + Y + Y^2$. 
\end{theorem}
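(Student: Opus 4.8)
The plan is to reduce the statement to the preliminary Proposition together with the three core-existence lemmas, then carry out an elementary number-theoretic translation and a Pell-equation argument for the infinitude.

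First I would invoke the Proposition characterising $L(S_n) = n!$ as the existence of a $p$-core partition of $n$ for every prime $p$. By the lemma covering all primes $p \geq 5$, those conditions hold automatically, so the only genuine constraints come from $p = 2$ and $p = 3$. The $2$-core lemma makes the $p=2$ constraint exactly that $n$ is triangular, and the $3$-core lemma makes the $p=3$ constraint exactly that $3n+1$ is Löschian. Hence $L(S_n) = n!$ if and only if $n$ is triangular and $3n+1$ is Löschian. The theorem then follows once I (a) identify ``$3n+1$ Löschian'' with ``$n = X^2 + X + XY + Y + Y^2$'', and (b) show the resulting set is infinite.

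For (a), the forward direction is a direct identity: with $A = X - Y$ and $B = X + 2Y + 1$ one checks
\[ A^2 + AB + B^2 = 3\left(X^2 + X + XY + Y + Y^2\right) + 1 = 3n + 1, \]
so $n$ of the stated form forces $3n+1$ Löschian. For the converse, suppose $3n+1 = A^2 + AB + B^2$. Reducing modulo $3$ shows $A^2 + AB + B^2 \equiv 1 \pmod 3$ precisely when $A \not\equiv B \pmod 3$, so this congruence is automatic. Inverting the substitution gives $Y = (B - A - 1)/3$ and $X = (2A + B - 1)/3$, which are integers exactly when $B \equiv A + 1 \pmod 3$. The two residue possibilities $B \equiv A \pm 1 \pmod 3$ are interchanged by the symmetry $(A, B) \mapsto (B, A)$ of the form $A^2 + AB + B^2$, so after possibly swapping $A$ and $B$ I may assume $B \equiv A + 1 \pmod 3$ and recover integers $X, Y$ with $n = X^2 + X + XY + Y + Y^2$. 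This establishes the ``precisely the intersection'' claim.

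For (b), I would exhibit an explicit infinite subfamily rather than appeal to density. Forcing $3T_k + 1$ to be a perfect square $m^2$ (and perfect squares are Löschian, since $m^2 = m^2 + m \cdot 0 + 0^2$), with the $k$-th triangular number $T_k = k(k+1)/2$, reduces to $8m^2 - 3(2k+1)^2 = 5$. This generalised Pell equation has the solution $(m, 2k+1) = (1,1)$, and its solutions are propagated by the fundamental unit $(5,1)$ of $x^2 - 24y^2 = 1$ via $(m, u) \mapsto (5m + 3u, 8m + 5u)$; since $u = 2k+1$ stays odd and strictly increases, this yields infinitely many triangular numbers with $3T_k + 1$ a perfect square, hence infinitely many valid $n$.

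The main obstacle is the converse half of (a): the naive inversion of the substitution produces integers only under the congruence $B \equiv A + 1 \pmod 3$, and the crux is recognising that the complementary case is removed for free by the symmetry of the ternary form. Verifying that this symmetry permutes the residues as claimed, together with confirming that $A \not\equiv B \pmod 3$ is forced, is where care is required; the remaining steps are the routine identity and a standard Pell recursion.
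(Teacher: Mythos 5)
Your proposal is correct, and its core — the substitution $A = X-Y$, $B = X+2Y+1$ showing that $n = X^2+X+XY+Y+Y^2$ if and only if $3n+1$ is Löschian, combined with the $2$-core, $3$-core and $p\geq 5$ lemmas — is exactly the paper's argument. Two points of difference are worth recording. First, your treatment of the converse is more careful than the paper's: you verify that $A \not\equiv B \pmod 3$ is forced by $A^2+AB+B^2 \equiv 1 \pmod 3$ and that the two residue cases are exchanged by the symmetry $(A,B)\mapsto(B,A)$, and your inverse substitution $X = (2A+B-1)/3$, $Y = (B-A-1)/3$ is internally consistent (both numerators are divisible by $3$ under the single hypothesis $B \equiv A+1 \pmod 3$), whereas the paper's stated formulas $X = (A+2B-1)/3$, $Y=(-A+B-1)/3$ impose contradictory congruences on $A-B$ and only cohere after correcting a sign. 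Second, for the infinitude you build a different explicit family: the paper takes $n$ simultaneously square and triangular (using that $3m^2+1$ is always Löschian and citing the classical infinitude of square triangular numbers), while you force $3T_k+1$ itself to be a perfect square via the generalised Pell equation $8m^2 - 3(2k+1)^2 = 5$ propagated by the unit $(5,1)$ of $x^2-24y^2=1$. Your route is self-contained where the paper leans on a reference; both produce infinitely many valid $n$ (yours yields $n = 1, 21, 2080, \ldots$, the paper's $n = 1, 36, 1225, \ldots$).
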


\begin{proof}
We begin by showing that $n$ is of the form $X^2 + X + XY + Y + Y^2$ if and only if $3n+1$ is Löschian.

Suppose $n = X^2 + X + XY + Y + Y^2$. Then for $A = X - Y$ and $B = X + 2Y + 1$ we have that 
$$ A^2 + A B + B^2 = (X - Y)^2 + (X - Y)(X + 2Y + 1) + (X + 2Y + 1)^2 = $$
$$ 3X^2 + 3X + 3XY + 3Y + 3Y^2 + 1 = 3n + 1 $$
so $3n + 1$ is Löschian.

Conversely, suppose that $3n + 1$ is Löschian. So $3n + 1 = A^2 + AB + B^2$ for some integers $A$ and $B$. Note that $A \not\equiv B$ so we can assume that $A - B \equiv 1$ mod $3$. Let $ X = \frac{A + 2B - 1}{3}$ and $ Y = \frac{-A + B -1}{3} $. Then 
$$ X^2 + X + XY + Y + Y^2 = \left( \frac{A + 2B - 1}{3} \right)^2 + \left( \frac{A + 2B - 1}{3} \right) \left( \frac{A - B -1}{3} \right) + \left( \frac{A - B -1}{3} \right)^2 = $$
$$ \frac{A^2 + AB + B^2}{3} = n $$
and we conclude that $n$ can be written in the form  $X^2 + X + XY + Y + Y^2$.

Now note that for all squares there exists a $3$-core partition and it is well-known that there are an infinite number of triangular squares \cite{pietenpol}. We therefore conclude that the sequence is infinite.
\end{proof}

\begin{corollary}
The integer sequence such that $L(S_n) = n!$ is given by
$$ 1, 6, 10, 21, 36, 66, 105, 120, 136, 190, \ldots \qquad \text{\cite{A363675}}$$
For all these numbers, we have that $\mathcal{K}'(S_n) \geq 1$. 
\end{corollary}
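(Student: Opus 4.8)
The plan is to handle the corollary's two assertions in turn, since each follows by assembling results already in hand.

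For the characterisation of the sequence, I would start from the Proposition stating that $L(S_n) = n!$ exactly when $n$ admits a $p$-core partition for every prime $p$. The Lemma covering primes $p \geq 5$ shows those primes never obstruct, so the condition reduces to the simultaneous existence of a $2$-core and a $3$-core partition of $n$. By the $2$-core Lemma this means $n$ is triangular, and by the $3$-core Lemma it means $3n+1$ is Löschian; the Theorem just proved rephrases the latter as $n$ being of the form $X^2 + X + XY + Y + Y^2$. Hence $L(S_n) = n!$ holds precisely on the intersection of the triangular numbers with that set, which is the content of the first claim. To extract the explicit terms I would sieve the triangular numbers $T_k = \tfrac{k(k+1)}{2}$ in increasing order, retaining those $k$ for which $3T_k + 1$ is Löschian, i.e. for which every prime $p \equiv 2 \pmod 3$ occurs to an even power in $3T_k+1$; this recovers $1, 6, 10, 21, 36, 66, \ldots$ and matches the referenced OEIS sequence.

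For the lower bound, the argument is immediate from Proposition \ref{lcm}: every $n$ in this sequence satisfies $L(S_n) = n! = |S_n|$, whence $\mathcal{K}'(S_n) \geq L(S_n)/|S_n| = 1$.

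Since all the structural work resides in the preceding lemmas and theorem, the corollary is essentially a matter of bookkeeping. I expect the only real obstacle to be the numerical verification that the sieved triangular numbers agree term-by-term with A363675: each candidate requires factoring $3T_k + 1$ and checking the parity of the exponents of the primes congruent to $2 \pmod 3$, a routine computation that must nonetheless be carried out with care so that no term is dropped or spuriously included.
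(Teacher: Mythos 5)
Your proposal is correct and follows exactly the route the paper intends: the corollary is an immediate consequence of the preceding theorem (intersecting the triangular numbers with those $n$ for which $3n+1$ is Löschian, the primes $p \geq 5$ posing no obstruction) together with Proposition \ref{lcm} applied with $L(S_n) = n! = |S_n|$. The explicit terms are then a routine sieve, as you note.
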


We will now answer the same question for $A_n$. We can show computationally that $A_n$ is of Knutson type for all $n \leq 16$ except for $12, 13$ and $15$ where characters with Knutson Index two appear. The representations of $A_n$ can be obtained from the ones of $S_n$.

\begin{proposition} \cite[Theorem 2.5.7]{james}
If $\chi$ is an irreducible character of $S_n$ with a partition that is self-conjugate, then its restriction to $A_n$ splits into two irreducible characters of $A_n$ of the same degree. If $\chi$ is not self-conjugate, then $\chi$ and its conjugate restrict to the same irreducible character of $A_n$.
\end{proposition}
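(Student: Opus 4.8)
The plan is to recognise this statement as a direct instance of Clifford theory applied to the index-two normal subgroup $A_n \trianglelefteq S_n$. The governing linear character is the sign character $\epsilon$, whose kernel is exactly $A_n$, and the one piece of genuinely symmetric-group-specific input is the classical identity $\chi_\lambda \otimes \epsilon = \chi_{\lambda'}$, where $\lambda'$ denotes the conjugate (transpose) partition. Once these two ingredients are in place, the whole proposition reduces to unwinding a standard dichotomy, with no dependence on $n$.

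First I would record the Clifford-theoretic dichotomy for a normal subgroup $N$ of index two in $G$ with associated sign character $\epsilon$. For any $\chi \in \text{Irr}(G)$ exactly one of two things occurs: either $\chi|_N$ is irreducible, which happens precisely when $\chi \otimes \epsilon \neq \chi$; or $\chi|_N$ decomposes as $\psi_1 + \psi_2$ with $\psi_1, \psi_2$ two \emph{distinct} irreducible characters of $N$, which happens precisely when $\chi \otimes \epsilon = \chi$. The number of constituents must divide the index $[G:N]=2$, and the restriction is multiplicity-free, so these are the only possibilities. In the splitting case, conjugation by any element of $S_n \setminus A_n$ interchanges $\psi_1$ and $\psi_2$; the two constituents are therefore $G$-conjugate and hence share the same degree $\chi(\text{id})/2$. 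This is the only place equal degrees are needed, and it comes for free from the conjugacy of the constituents.

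Next I would translate the trigger condition $\chi \otimes \epsilon = \chi$ into the language of partitions using $\chi_\lambda \otimes \epsilon = \chi_{\lambda'}$. This gives $\chi_\lambda \otimes \epsilon = \chi_\lambda$ if and only if $\lambda = \lambda'$, that is, if and only if $\lambda$ is self-conjugate. The two halves of the proposition then read off immediately: a self-conjugate partition lands in the splitting case, yielding two irreducible characters of $A_n$ of equal degree, while a non-self-conjugate partition lands in the irreducible case. For the final assertion I would use that two irreducibles of $G$ restrict to the \emph{same} irreducible of $N$ exactly when they differ by $\epsilon$; since $\chi_{\lambda'} = \chi_\lambda \otimes \epsilon$, the character $\chi_\lambda$ and its conjugate $\chi_{\lambda'}$ therefore restrict to a common irreducible character of $A_n$.

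The main obstacle is justifying the identity $\chi_\lambda \otimes \epsilon = \chi_{\lambda'}$ rather than the Clifford theory, which is entirely formal. I would either invoke it directly from the standard theory of symmetric-group representations in \cite{james}, or derive it from the Murnaghan–Nakayama rule, under which transposing the partition and multiplying character values by $\epsilon$ are seen to coincide. With that identity cited, the remaining steps — the index-two dichotomy and the equal-degree observation — are routine and require no case analysis in $n$.
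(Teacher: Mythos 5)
Your argument is correct. Note, however, that the paper offers no proof of this proposition at all: it is stated as a known classical fact about the representation theory of $S_n$ and $A_n$ (the relevant machinery is in \cite[Section 2.5]{james}, which the paper cites a few lines later for the splitting formula). So there is no ``paper's route'' to compare against; what you have done is supply the standard proof that the paper implicitly delegates to the literature. Your two ingredients are exactly the right ones: the index-two Clifford dichotomy (restriction to $A_n$ is either irreducible or a sum of two distinct $S_n$-conjugate constituents, governed by whether $\chi \otimes \epsilon = \chi$, with equal degrees coming for free from conjugacy), and the identity $\chi_\lambda \otimes \epsilon = \chi_{\lambda'}$ translating that trigger into self-conjugacy of the partition. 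Your final step --- that two irreducibles of $S_n$ restrict to the same irreducible of $A_n$ exactly when they differ by $\epsilon$, via Frobenius reciprocity applied to $\mathrm{Ind}_{A_n}^{S_n}$ --- correctly handles the second half of the statement, which a purely ``restriction is irreducible'' argument would leave incomplete. The only thing to tidy up in a final write-up is to give a precise citation for $\chi_\lambda \otimes \epsilon = \chi_{\lambda'}$ (it is \cite[2.1.8]{james} or can be read off from the Murnaghan--Nakayama rule as you suggest); everything else is formal and complete.
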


Let $L(A_n)$ be the lowest common multiple of the degrees of all irreducible characters of $A_n$. It follows from the previous proposition that $L(A_n) = L(S_n)$ or $L(A_n) = L(S_n)/2$. So if $L(S_n) = n!$ then $L(A_n) = n!/2 = |A_n|$. However, there are more cases with $L(A_n) = n!/2 = |A_n|$. We are interested in partitions with a unique hook length of two.

\begin{proposition}
There exists a partition of $n$ with a unique hook of even length if and only if $n = m(m+1)/2 + 2$.
\end{proposition}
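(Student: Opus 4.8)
I read the phrase \emph{a unique hook length of two} in the sense forced by the surrounding $L(A_n)$ discussion: the partition $\lambda \vdash n$ should have exactly one \emph{even} hook length. This is the relevant condition, since $v_2\!\left(n!/\prod H_{i,j}\right) = v_2(n!) - \sum_{i,j} v_2(H_{i,j})$, so a single even hook is exactly what yields an irreducible degree of $2$-adic valuation $v_2(n!)-1 = v_2(|A_n|)$. I would first note that this reading agrees with the literal one: the even hook lengths of $\lambda$, after division by $2$, are precisely the hook lengths of the $2$-quotient, so a single even hook means the $2$-quotient is one box, whose hook length is $1$, forcing the even hook to equal $2$. (Thus ``a unique even hook'' and ``a unique hook equal to $2$'' coincide under the hypothesis; note that the literal reading alone is insufficient, as $\lambda=(4)$ has the single hook $2$ but also the hook $4$.)

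The plan is to measure the distance of $\lambda$ from its $2$-core in dominoes, using two standard facts from core/quotient theory: the number of cells of $\lambda$ with even hook length equals the $2$-weight $w(\lambda)$, and $|\lambda| = |\mathrm{core}_2(\lambda)| + 2\,w(\lambda)$. By the $2$-core lemma already proved, $\mathrm{core}_2(\lambda)$ is a staircase $(k,k-1,\dots,1)$, so its size is a triangular number $m(m+1)/2$. Hence $\lambda$ has exactly one even hook if and only if $w(\lambda)=1$, in which case $n = |\lambda| = m(m+1)/2 + 2$. This settles the forward implication and shows the stated set of $n$ is the only possibility.

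For the converse I would exhibit, for each $m \ge 0$, a partition of $m(m+1)/2+2$ with a unique even hook, namely $\lambda = (m+2,\,m-1,\,m-2,\dots,2,1)$ (with the conventions $\lambda=(2)$ for $m=0$ and $\lambda=(3)$ for $m=1$). I would read off the hook multiset from the first-column hook lengths (the beta-set) $\beta_i = \lambda_i + (m-i)$, which here equals $X = \{1,3,5,\dots,2m-3\}\cup\{2m+1\}$: all odd numbers up to $2m-3$ together with $2m+1$, with $2m-1$ omitted. Every hook length of $\lambda$ has the form $x-y$ for a bead $x\in X$ and a non-bead $y<x$, and is even only when $x,y$ share parity; since $X$ contains no even number, the sole even hook arises from the bead $2m+1$ and the non-bead $2m-1$, giving the single value $2m+1-(2m-1)=2$. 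This confirms $w(\lambda)=1$ and closes the equivalence.

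The real content, and the step I expect to be the main obstacle, is the identity $\#\{\text{even hooks}\} = w(\lambda)$ alongside $|\lambda| = |\mathrm{core}_2(\lambda)| + 2\,w(\lambda)$; a purely shape-theoretic count in the style of the $2$-core lemma will not do, precisely because one shape defect (as in $(4)$) can spawn several even hooks. I would establish these identities with the $2$-runner abacus: encoding $\lambda$ by $X$ and sorting beads onto runners by parity, even hooks correspond bijectively to pairs of a bead and a lower empty position on the same runner, so their number is the total count of such inversions, which is $w(\lambda)$ and vanishes exactly at the flush (core) configuration; sliding each bead down to its flush position removes one domino per inversion, giving $|\lambda|-|\mathrm{core}_2(\lambda)| = 2\,w(\lambda)$. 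Alternatively both identities can be cited directly from standard core/quotient theory. Only the degenerate values $m=0,1$ need separate verification, which is immediate.
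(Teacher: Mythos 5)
Your proof is correct under your reading of the statement, but it takes a genuinely different route from the paper's. The paper argues directly on the shape: it claims $\lambda_2=\lambda_1-1$ or $\lambda_2=\lambda_1-3$ and then forces $\lambda$ to be the staircase $(\lambda_1,\lambda_1-1,\dots,2,1,1,1)$ or its transpose, reading off $n=m(m+1)/2+2$; you instead invoke the $2$-core/$2$-quotient machinery (number of even hooks equals the $2$-weight, $|\lambda|=|\mathrm{core}_2(\lambda)|+2w(\lambda)$, and the earlier lemma that $2$-cores are staircases) and supply the explicit witness $(m+2,m-1,\dots,1)$ via its beta-set for the converse. Your approach buys two things. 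First, it isolates the interpretive point that actually matters for the $L(A_n)$ application: the condition needed is $\sum_{i,j}v_2(H_{i,j})=1$, i.e.\ a unique \emph{even} hook, and you correctly observe that the literal reading ``exactly one box of hook length $2$'' is strictly weaker and makes the proposition false as stated ($\lambda=(4)$ is a partition of $4$ with a single hook equal to $2$, yet $4$ is not a triangular number plus two) --- this is a real catch, since the paper's own first step ($\lambda_2=\lambda_1-1$ or $\lambda_1-3$) silently fails for $(4)$ and is only justified under the stronger reading. Second, the abacus argument generalises immediately to $t$-hooks. What the paper's elementary shape analysis buys in exchange is an explicit classification of \emph{all} partitions with the property (exactly the two conjugate shapes), whereas you only exhibit one witness per $n$; that is all the proposition needs, so nothing is lost. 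Your beta-set computation for $(m+2,m-1,\dots,1)$ checks out, and the two standard identities you rely on are safely citable from the core/quotient literature the paper already references.
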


\begin{proof}
Let $\lambda = (\lambda_1, \ldots, \lambda_r)$ be a partition with a unique even hook length. Given $\lambda_1$ we see that $\lambda_2 = \lambda_1 - 1$ or $\lambda_2 = \lambda_1 - 3$ as otherwise we create multiple even hook lengths. 

In the first case, as long as $\lambda_i \geq 2$ we are now forced to take $\lambda_{i+1} = \lambda_i - 1$ as otherwise the hook lenght of the second last box of $\lambda_i$ is two and of the third last box of $\lambda_{i+1}$ is four. When we reach $\lambda_i = 1$ we see that we need exactly three rows of this length to get exactly one hook with even length in the first column. We obtain that the partition must be $(\lambda_1, \lambda_1-1, \lambda_1-2, \ldots, 3, 2, 1, 1, 1)$.

In the second case, we get that the second last hook of the first row is of length two, so we need $\lambda_{i+1} = \lambda_{i} -1$ for $i \geq 2$ as otherwise we create a second hook of even lenght at the second last box of $\lambda_1$. We obtain the partition $(\lambda_1, \lambda_1 - 3, \lambda_1 - 4, \ldots, 3, 2, 1)$ which is the transpose of the previous partition. 

We conclude in both cases that $n$ is a triangular number plus two.
\end{proof}

\begin{theorem}
We have that $L(A_n) = |A_n|$ if and only if there is a $3$-core partition of $n$, so if and only if $n$ is of the form $X^2 + X + X Y + Y + Y^2$ and either $n$ is triangular or a triangular number plus two.
\end{theorem}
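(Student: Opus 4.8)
The plan is to reduce the single identity $L(A_n) = |A_n| = n!/2$ to a family of prime-local conditions on the exponents $v_p(L(A_n))$, where I write $v_p(m)$ for the exponent of the prime $p$ in $m$. By the Proposition describing how $S_n$-characters restrict to $A_n$, the degrees occurring in $A_n$ are exactly the numbers $\chi_\lambda(\text{id})$ for the non-self-conjugate partitions $\lambda$ (appearing in conjugate pairs of equal degree) together with the numbers $\chi_\lambda(\text{id})/2$ for the self-conjugate $\lambda$. Taking the lowest common multiple prime by prime, this gives $v_p(L(A_n)) = \max_\lambda v_p(\chi_\lambda(\text{id})) = v_p(L(S_n))$ for every odd $p$, because dividing by $2$ does not change the odd part; while for $p = 2$ the self-conjugate contributions are each lowered by one. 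Since the degree of any irreducible character divides the group order, we moreover have $L(A_n) \mid |A_n|$ automatically, so in particular $v_2(L(A_n)) \leq v_2(n!) - 1$ and $v_p(L(A_n)) \leq v_p(n!)$ for odd $p$. It therefore remains to determine, prime by prime, exactly when these upper bounds are attained.

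For an odd prime $p$ we have $v_p(\chi_\lambda(\text{id})) = v_p(n!) - v_p\big(\prod_{i,j} H_{i,j}\big)$, so $\max_\lambda v_p(\chi_\lambda(\text{id})) = v_p(n!)$ exactly when some $\lambda$ has no hook length divisible by $p$, i.e. when a $p$-core partition of $n$ exists; and since this maximum is unaffected by halving, $v_p(L(A_n)) = v_p(n!)$ under the same condition. By the Lemma guaranteeing a $p$-core for every prime $p \geq 5$, the only odd prime that can obstruct equality is $p = 3$, so the full collection of odd-prime conditions collapses to the single requirement that a $3$-core partition of $n$ exists; by the earlier Theorem this is equivalent to $n$ having the form $X^2 + X + XY + Y + Y^2$.

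The remaining and main task is to characterise when $v_2(L(A_n)) = v_2(n!) - 1$. First I would identify the partitions that can produce an $A_n$-degree with this $2$-adic valuation. There are only two possibilities: a self-conjugate $\lambda$ with $v_2(\chi_\lambda(\text{id})) = v_2(n!)$, equivalently a self-conjugate $2$-core; or a non-self-conjugate $\lambda$ with $v_2(\chi_\lambda(\text{id})) = v_2(n!) - 1$, equivalently with $v_2\big(\prod_{i,j} H_{i,j}\big) = 1$. For the first, the Lemma on $2$-cores shows the only $2$-core is the staircase $(r, r-1, \ldots, 1)$, which is self-conjugate and exists precisely when $n$ is triangular. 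For the second, I would pass to the $2$-core/$2$-quotient correspondence: the hook lengths of $\lambda$ divisible by $2$ are $2$ times the hook lengths of the $2$-quotient, whence $v_2\big(\prod_{i,j} H_{i,j}\big) = 1$ holds if and only if the $2$-quotient has total size one. Such $\lambda$ have a single even hook, necessarily of length two, so by the Proposition on partitions with a unique hook length of two they are exactly the partitions of $n = m(m+1)/2 + 2$; moreover the two of them are mutual transposes, hence non-self-conjugate, matching the requirement. Therefore $v_2(L(A_n)) = v_2(n!) - 1$ if and only if $n$ is triangular or a triangular number plus two.

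Assembling the two analyses, $L(A_n) = |A_n|$ holds if and only if a $3$-core partition of $n$ exists and $n$ is triangular or a triangular number plus two, which is the assertion. I expect the $p = 2$ step to be the crux: the upper bound $v_2(L(A_n)) \leq v_2(n!) - 1$ is free, but pinning down equality forces one to track not just the existence of extremal partitions but their conjugacy type, so that the factor of $2$ lost under restriction to $A_n$ is accounted for correctly. The cleanest way to control $v_2\big(\prod_{i,j} H_{i,j}\big)$ is via the $2$-quotient, which simultaneously delivers the valuation and the transpose-pairing; staying purely within the paper's framework one can instead invoke the explicit shapes of the $2$-core and unique-hook-of-two propositions, checking that the latter carry no further even hook lengths so that their $2$-weight is genuinely one.
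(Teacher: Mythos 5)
Your proof is correct, and since the paper states this theorem without any proof, your argument is in effect supplying the missing one; it assembles exactly the ingredients the surrounding results provide (the restriction proposition, the $2$-core and $3$-core lemmas, the $p$-core lemma for $p\geq 5$, and the unique-hook-of-two proposition) into the prime-by-prime valuation analysis the paper clearly intends. One point where you add genuine value: the $2$-quotient argument pins down that ``$v_2\bigl(\prod_{i,j}H_{i,j}\bigr)=1$'' means the $2$-weight is one, i.e.\ a \emph{unique even hook}, necessarily of length two. This matters because the paper's Proposition, read literally as ``exactly one box has hook length equal to $2$,'' is false (e.g.\ the partition $(4)$ of $n=4$ has a unique hook of length $2$ but also a hook of length $4$, and $4$ is not of the form $m(m+1)/2+2$); your reading via the $2$-quotient is the correct one and simultaneously delivers the transpose-pairing needed to certify that these partitions are not self-conjugate, so that their full degree survives restriction to $A_n$.
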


\begin{corollary} \label{An seq}
The integer sequence such that $L(A_n) = n!/2$ contains the one for $L(S_n) = n!$ and is given by 
$$ 1, 2, 5, 6, 8, 10, 12, 17, 21, 30, 36, 57, \ldots \qquad \text{\cite{A363676}}$$ 
For all these numbers, we have that $\mathcal{K}'(S_n) \geq 1$.
\end{corollary}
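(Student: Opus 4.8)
The statement bundles three claims, and the plan is to dispatch them in turn, leaning entirely on the theorem immediately preceding it (which already characterises $L(A_n) = |A_n|$ arithmetically) together with Proposition \ref{lcm}. The three claims are: that the sequence for $L(A_n) = n!/2$ contains the sequence for $L(S_n) = n!$; that this sequence is exactly the one listed from \cite{A363676}; and that $\mathcal{K}'(A_n) \geq 1$ for each such $n$. (The printed $\mathcal{K}'(S_n)$ appears to be a typo for $\mathcal{K}'(A_n)$: membership in this larger sequence does not force $L(S_n) = n!$, as the extra terms such as $n = 2$ show, so the $S_n$ bound cannot be concluded for them by this method.)

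For the containment I would argue by divisibility. Every irreducible degree of $A_n$ divides $|A_n| = n!/2$, so $L(A_n)$, being their lowest common multiple, divides $n!/2$ and in particular $L(A_n) < n!$. Combined with the observation preceding the theorem that $L(A_n) \in \{L(S_n), L(S_n)/2\}$, whenever $L(S_n) = n!$ the only surviving possibility is $L(A_n) = n!/2 = |A_n|$. Equivalently, and more transparently, one compares the two arithmetic characterisations directly: $L(S_n) = n!$ demands a $3$-core partition of $n$ together with $n$ triangular, whereas $L(A_n) = |A_n|$ demands a $3$-core partition together with $n$ triangular \emph{or} a triangular number plus two. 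The former condition is strictly stronger, yielding the containment at once.

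To identify the sequence, I would intersect the set of $n$ admitting a $3$-core partition---equivalently $n = X^2 + X + XY + Y + Y^2$, by the earlier theorem---with the union of the triangular numbers and the numbers of the form (triangular $+\,2$), and then match the opening terms against \cite{A363676}. Concretely this means testing, for each candidate $n$, whether $3n+1$ is Löschian and whether $n$ or $n-2$ is triangular; the terms $1, 2, 5, 6, 8, 10, 12, \dots$ arise exactly from this sieve. Infinitude is inherited from the $S_n$ case, since that subsequence is already infinite by the preceding theorem.

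The lower bound is then immediate: for each $n$ in the sequence we have $L(A_n) = |A_n|$, so Proposition \ref{lcm} applied to $A_n$ gives $\mathcal{K}'(A_n) \geq L(A_n)/|A_n| = 1$. The only genuine obstacle is the finite but error-prone verification that the disjunctive condition ``triangular or triangular $+\,2$, together with a $3$-core'' reproduces the listed terms with no spurious inclusions or omissions, and that the degree-splitting proposition is invoked correctly in the smallest cases (for instance $n = 1, 2$), where the partition combinatorics degenerate; everything else is a direct appeal to results already in hand.
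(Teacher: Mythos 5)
Your proposal is correct and follows the same route the paper intends: the corollary is an immediate consequence of the preceding theorem's arithmetic characterisation of $L(A_n) = |A_n|$ together with Proposition \ref{lcm}, and your containment argument (the $S_n$ condition being strictly stronger than the $A_n$ one) is the natural unpacking. Your observation that the printed $\mathcal{K}'(S_n) \geq 1$ should read $\mathcal{K}'(A_n) \geq 1$ is well taken --- the case $n = 2$ already shows the bound cannot apply to $S_n$ by this method --- and is a genuine erratum rather than a gap in your argument.
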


So we have found sufficient requirements on $n$ such that $\mathcal{K}'(S_n) \geq 1$ and $\mathcal{K}'(A_n) \geq 1$. We also know that if $S_n$ and $A_n$ contain a zero in every non-trivial column then $\mathcal{K}'(S_n) = \mathcal{K}(S_n) \geq 1$ and $\mathcal{K}'(A_n) = \mathcal{K}(A_n) \geq 1$.

\begin{proposition}
For $n \geq 3$, the character table of $A_n$ has a zero in every non-trivial column if and only if $S_n$ does.
\end{proposition}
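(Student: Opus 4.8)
The plan is to compare the two tables column by column, using the precise restriction/splitting dictionary recorded in the previous proposition. Columns of the $S_n$ table are indexed by cycle types $\mu \vdash n$, whereas columns of the $A_n$ table are indexed by the even cycle types, each cycle type with distinct odd parts splitting into two $A_n$-classes $C_\mu^+,C_\mu^-$. I would therefore organise the argument by whether a cycle type is even or odd and whether its $A_n$-class splits, and reduce everything to two inputs: how restricted characters evaluate, and a sign argument for the odd classes. The first thing to set down is the value dictionary from the theory of $A_n$-characters \cite{james}: for $g\in A_n$ of cycle type $\mu$ and $\lambda$ not self-conjugate, $(\chi_\lambda|_{A_n})(g)=\chi_\lambda(\mu)$; for $\lambda$ self-conjugate with $\chi_\lambda|_{A_n}=\psi_\lambda^++\psi_\lambda^-$, one has $\psi_\lambda^\pm(g)=\tfrac12\chi_\lambda(\mu)$ unless $\mu$ is exactly the cycle type given by the distinct odd principal hook lengths $h_1>\cdots>h_d$ of $\lambda$, in which case $\psi_\lambda^\pm(g)=\tfrac12\bigl(\epsilon\pm\sqrt{\epsilon\,h_1\cdots h_d}\bigr)$ with $\epsilon=(-1)^{(n-d)/2}$.

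The key elementary observation, which I would isolate first, is that for $n\ge 3$ these exceptional values are never $0$: solving $\tfrac12(\epsilon\pm\sqrt{\epsilon\prod h_i})=0$ forces $\prod h_i=1$, impossible once $n\ge3$ since the $h_i$ are distinct odd numbers of product at least $3$. Equivalently $\chi_\lambda(\mu)=\psi_\lambda^+(g)+\psi_\lambda^-(g)=\epsilon=\pm1\neq0$ on this exceptional class, so the unique self-conjugate partition attached to a split class contributes no zero on either side of the table. With this in hand the forward direction is quick: assuming every non-trivial $S_n$-column has a zero, I take a non-identity $g\in A_n$, necessarily of even cycle type $\mu\neq(1^n)$, and pick $\lambda$ with $\chi_\lambda(\mu)=0$. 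If $\lambda$ is not self-conjugate then $\chi_\lambda|_{A_n}$ is irreducible and vanishes at $g$; if $\lambda$ is self-conjugate then $\mu$ cannot be its exceptional class (there $\chi_\lambda=\pm1$), so $\psi_\lambda^\pm(g)=\tfrac12\chi_\lambda(\mu)=0$. Since these values do not depend on the sign of a split class, both $C_\mu^\pm$ are covered.

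The converse needs one extra ingredient to handle the odd cycle types, which index no column of $A_n$ at all. Here I would use $\chi_{\lambda'}=\mathrm{sgn}\cdot\chi_\lambda$: for a self-conjugate $\lambda$ this gives $\chi_\lambda(\mu)=\mathrm{sgn}(\mu)\,\chi_\lambda(\mu)$, so $\chi_\lambda(\mu)=0$ for every odd $\mu$. As a self-conjugate partition of $n$ exists for all $n\neq2$ (equivalently $n$ is a sum of distinct odd parts), every odd non-trivial column of $S_n$ automatically carries a zero for $n\ge3$. It then remains to treat even $\mu\neq(1^n)$: assuming every non-trivial $A_n$-column has a zero, choose an irreducible $\psi$ of $A_n$ with $\psi(g)=0$ for $g$ of type $\mu$, and read the dictionary backwards — $\psi=\chi_\lambda|_{A_n}$ gives $\chi_\lambda(\mu)=0$, while $\psi=\psi_\lambda^\pm$ gives $\tfrac12\chi_\lambda(\mu)=0$, the exceptional-value case being excluded precisely because those entries are nonzero. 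Hence every even non-trivial $S_n$-column has a zero too, and the equivalence follows.

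The hard part will not be any single computation but the bookkeeping around the split classes and their irrational character values; the whole argument hinges on the clean fact, valid exactly when $n\ge3$, that the exceptional square-root entries never vanish, which is what lets me pass zeros freely between the two tables. The sign-character argument disposing of the odd columns is the other essential idea, and it is what forces the hypothesis $n\ge3$ through the existence of a self-conjugate partition.
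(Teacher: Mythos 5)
Your proof is correct and follows essentially the same route as the paper, whose two-line sketch cites exactly the same two ingredients: the vanishing of self-conjugate characters of $S_n$ on odd permutations (to handle the odd columns, which index no $A_n$-class), and the splitting formula of \cite[Theorem 2.5.12]{james} to pass zeros between the even columns. Your explicit observation that the exceptional square-root values on split classes never vanish for $n \geq 3$ is a detail the paper leaves implicit but which the argument genuinely needs in both directions.
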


\begin{proof}
For one direction, note that all odd permutations vanish for self-transpose characters of $S_n$ and the other direction follows from the formula for splitting characters  \cite[Theorem 2.5.12]{james}.
\end{proof}

\begin{proposition}
A non-vanishing conjugacy class of $S_n$ has cycle shape $(3^a, 2^b)$, where $b$ is even. Furthermore, if $3n+1$ is Löschian then $a=0$ and if $n$ or $n-2$ is triangular then $b=0$.
\end{proposition}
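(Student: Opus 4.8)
The plan is to analyse character values through the Murnaghan--Nakayama rule: if $\mu$ is the cycle type of $\sigma$ and some $\lambda \vdash n$ has no hook of length equal to a part $t$ of $\mu$, then peeling that part off first shows $\chi^\lambda(\sigma) = 0$, so $\sigma$ lies in a vanishing class. In particular, whenever a $t$-core partition of $n$ exists, every part equal to $t$ forces vanishing. First I would bound the part sizes. If a part $t$ is divisible by a prime $p \ge 5$, then the $p$-core guaranteed by the earlier lemma has no hook divisible by $p$, hence none of length $t$, and $\chi^{p\text{-core}}(\sigma)=0$; so every part is $3$-smooth. Every $3$-smooth integer $\ge 4$ is divisible by one of $4,6,9$, so, granting the existence of $4$-, $6$- and $9$-cores of $n$ for every $n$ (the composite analogues of the cited $p$-core results) and the observation that a $d$-core is automatically a $t$-core when $d \mid t$, I would rule out all parts $\ge 4$. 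This leaves parts in $\{1,2,3\}$.

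For the parity of $b$ I would use conjugation of partitions: since $\chi^{\lambda'} = \operatorname{sgn}\cdot\chi^\lambda$, a self-conjugate $\lambda$ gives $\chi^\lambda(\sigma) = \operatorname{sgn}(\sigma)\,\chi^\lambda(\sigma)$, so a non-vanishing $\sigma$ must be even. Self-conjugate partitions of $n$ exist for every $n \ne 2$ (they correspond to partitions into distinct odd parts), and an even permutation with parts in $\{1,2,3\}$ has an even number of transpositions; hence $b$ is even.

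The three conditional clauses then follow from the same peeling mechanism applied to specially chosen $\lambda$. If $3n+1$ is Löschian then $n$ has a $3$-core, whose $3$-weight is $0$, so no $3$-border-strip can be removed and the corresponding character vanishes on $\sigma$ as soon as $a \ge 1$; thus $a = 0$. If $n$ is triangular the staircase is a $2$-core and the same argument with dominoes gives $b = 0$. If instead $n-2$ is triangular, the earlier proposition furnishes a $\lambda \vdash n$ with a unique hook of length $2$, i.e. of $2$-weight $1$; only one domino can be removed, so $\chi^\lambda(\sigma) = 0$ whenever $b \ge 2$, forcing $b \le 1$, and combined with $b$ even this gives $b = 0$.

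The hard part will be excluding fixed points, that is, showing $c = 0$ for $\sigma \ne \mathrm{id}$, because a part of size $1$ is a removable corner and never produces a core obstruction. Here I would compute the values on hook characters via the generating identity $\sum_{k=0}^{n-1} \chi^{(n-k,1^k)}(\sigma)\,t^k = (1+t)^{c-1}(1-t^2)^b(1+t^3)^a$ and look for a vanishing coefficient in degrees $0,\dots,n-1$; for $c=1$ the degree-$1$ coefficient already vanishes, recovering $\chi^{(n-1,1)}(\sigma)=0$. The obstacle is that hooks alone do not suffice (for instance cycle type $(2,2,1,1)$ has all hook values non-zero yet is vanishing), so I expect to supplement them with the two-row characters $\chi^{(n-k,k)}$ or with $2$- and $3$-core characters, and the delicate point is to prove that for every admissible $(a,b,c)$ with $c \ge 1$ at least one such character value is zero.
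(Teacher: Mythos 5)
Your handling of the cycle lengths, the parity of $b$, and the three conditional clauses matches the paper's argument. The paper likewise derives everything from the Murnaghan--Nakayama rule together with the existence of $t$-core partitions of $n$: for all $t\geq 4$ (the Granville--Ono result is stated for every integer $t\geq 4$, not only primes, so your detour through $4$-, $6$- and $9$-cores is unnecessary, though harmless); the $3$-core in the Löschian case; the staircase $2$-core in the triangular case; and, when $n-2$ is triangular, a partition of $2$-weight one to force $b\leq 1$, which combines with parity to give $b=0$. Your self-conjugate-character argument for the evenness of $b$ is correct and in fact supplies a step the paper only asserts (it is invoked explicitly only in the following proposition).

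The step of your plan that would fail is precisely the one you flag as the hard part: excluding fixed points. This cannot be done, because the claim is false. In $S_7$ the class of $3$-cycles, of cycle type $(3,1,1,1,1)$, is non-vanishing: against the partitions $(7)$, $(6,1)$, $(5,2)$, $(5,1,1)$, $(4,3)$, $(4,2,1)$, $(4,1^3)$, $(3,3,1)$, $(3,2,2)$, $(3,2,1,1)$, $(3,1^4)$, $(2,2,2,1)$, $(2,2,1^3)$, $(2,1^5)$, $(1^7)$ its column of character values is $1,3,2,3,-1,-1,2,-3,-3,-1,3,-1,2,3,1$, with no zero entry; the computation is confirmed by $\sum_\chi \chi(g)^2 = 72 = |C_{S_7}(g)|$ and $\sum_\chi \chi(1)\chi(g)=0$. (The identity class, of shape $(1^n)$, already shows the literal reading of the statement cannot be intended.) The proposition must therefore be read as constraining only the non-trivial cycles: every cycle of length at least $4$ forces vanishing, the number of $2$-cycles is even, and under the stated hypotheses there are no $3$-cycles and no $2$-cycles; fixed points are not mentioned, the paper's proof never addresses them, and the downstream corollary does not need them, since $a=b=0$ already leaves only the identity as a candidate non-vanishing class. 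So you should drop the attempt to force $c=0$; your suspicion that hook and two-row characters would not suffice was right, but for the stronger reason that no character can do the job.
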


\begin{proof}
It follows from the Murnaghan-Nakayama Formula \cite[Theorem 2.4.7]{james} that if for some $s \geq 0$ and $t \geq 2$, there exists a $t$-core partition of $n-st$ then all conjugacy classes containing more than $s$ cycles of length $t$ vanish for some character \cite{Morotti}. As for all $t \geq 4$, there exists a $t$-core partition of $n$, every element containing a $t$-cycle vanishes. If $3n+1$ is Löschian, then there exists a $3$-core partition of $n$, so every element containing a $3$-cycle vanishes. If $n$ is a triangular number, then every element containing a $2$-cycle vanishes and if $n-2$ is a triangular number, then every element containing two $2$-cycles vanishes, so $b \leq 1$, but as $(3^a, 2^b)$ is even this implies that $b = 0$.
\end{proof}

\begin{corollary} \label{cor zeros}
For all integers in the sequence of Corollary \ref{An seq}, we have that all non-trivial columns of $A_n$ and $S_n$ have a zero and therefore $\mathcal{K}'(A_n), \mathcal{K}'(S_n) \geq 1$.
\end{corollary}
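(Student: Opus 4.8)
The plan is to assemble the two preceding propositions and invoke Proposition~\ref{zeros}. First I would recall that, by the theorem immediately preceding Corollary~\ref{An seq}, every $n$ appearing in the listed sequence satisfies two arithmetic conditions \emph{simultaneously}: $3n+1$ is Löschian (equivalently, $n$ admits a $3$-core partition), and either $n$ or $n-2$ is triangular.

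Next I would feed these two conditions into the previous proposition on non-vanishing conjugacy classes. That proposition already shows, for arbitrary $n$, that any non-vanishing conjugacy class of $S_n$ has cycle shape $(3^a, 2^b)$ with $b$ even, using the existence of $t$-core partitions for every $t \geq 4$ to rule out cycles of length at least $4$. The Löschian hypothesis then forces $a = 0$ and the triangular hypothesis forces $b = 0$. Since a class with no cycles of length $\geq 4$, no $3$-cycles, and no $2$-cycles consists only of fixed points, the identity is the unique non-vanishing class; equivalently, every non-trivial column of the character table of $S_n$ contains a zero.

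I would then transfer this to $A_n$ via the proposition stating that, for $n \geq 3$, the table of $A_n$ has a zero in every non-trivial column if and only if $S_n$ does, so the same holds for $A_n$. Applying Proposition~\ref{zeros} to both groups gives $\mathcal{K}'(A_n) = \mathcal{K}(A_n)$ and $\mathcal{K}'(S_n) = \mathcal{K}(S_n)$. Because the Knutson Index is by definition the smallest \emph{positive integer} with the required property, it is at least $1$, whence $\mathcal{K}'(A_n), \mathcal{K}'(S_n) \geq 1$, as claimed.

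As for difficulty, nearly all of the genuine work has already been done in the preceding propositions, so this statement is essentially a bookkeeping assembly. The only point demanding a little care is confirming that the two arithmetic hypotheses of the non-vanishing proposition coincide exactly with the two conditions carved out by the classification of the sequence, so that $a$ and $b$ are forced to vanish at once, leaving only the identity class and hence a zero in every other column.
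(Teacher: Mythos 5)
Your proposal is correct and follows exactly the route the paper intends: the corollary is stated without proof precisely because it is the assembly of the preceding theorem (membership in the sequence means $3n+1$ is Löschian and $n$ or $n-2$ is triangular), the non-vanishing-class proposition (forcing $a=b=0$, leaving only the identity class), the $A_n$/$S_n$ transfer proposition, and Proposition~\ref{zeros}. Your bookkeeping, including the final observation that the Knutson Index is a positive integer and hence at least $1$, matches the paper's intent.
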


However, if we compute the sequence of integers $n$ such that $S_n$ has a zero for every non-trivial conjugacy class, we obtain more values: $$ 1, 5, 6, 8, 9, 10, 12, 14, 17, 21, 28, 30, 32, 34, 36, 37, 38 \ldots \qquad \text{\cite{A363701}} $$
So we have found the additional values to the ones already known following from Corollary \ref{cor zeros}. These are $9, 14, 28, 32, 34, 37, 38$. We propose the following question.

\begin{question}
Does the sequence of integers $n$ such that $S_n$ has a zero for every non-trivial conjugacy class follow any pattern and are these additional values finite or infinite?
\end{question}

The following table contains the generalised Knutson Indices for $A_n$ and $S_n$ for $n \leq 10$.

\begin{table}[htbp]
\centering
\renewcommand{\arraystretch}{1.5}
\begin{tabular}{|c|c|c|c|c|}
\hline
$n$ & $L(S_n)/n!$ & \makecell{Zeros in every \\ non-trivial column} & $\mathcal{K}'(S_n)$ & $\mathcal{K}'(A_n)$ \\
\hline
$3$ & $1/3$ & $\times$ & $1/3$ & $1/3$ \\
$4$ & $1/4$ & $\times$ & $1/4$ & $1/4$ \\
$5$ & $1/2$ & $\checkmark$ & $1$ & $1$ \\
$6$ & $1$ & $\checkmark$ & $1$ & $1$ \\
$7$ & $1/12$ & $\times$ & $1/12$ & $1/12$ \\
$8$ & $1/2$ & $\checkmark$ & $1$ & $1$ \\
$9$ & $1/8$ & $\checkmark$ & $1$ & $1$ \\
$10$ & $1$ & $\checkmark$ & $1$ & $1$ \\
\hline
\end{tabular}
\end{table}

So we propose the following conjecture.

\begin{conjecture}
If $S_n$ has a zero in every non-trivial column then $\mathcal{K}'(S_n) = \mathcal{K}'(A_n) = 1$ and otherwise $\mathcal{K}'(S_n) = \mathcal{K}'(A_n) = L(S_n)/n!$. 
\end{conjecture}

\newpage
\bibstyle{numeric}
\printbibliography
\end{document}